\documentclass[11pt]{article}

\usepackage{amsmath}
\usepackage{amssymb}
\usepackage{pifont}
\usepackage{geometry}
\usepackage{tikz-cd}
\usepackage{mathtools}

\usepackage[T2A,T1]{fontenc}
\DeclareTextSymbolDefault{\textsection}{OMS}
\DeclareRobustCommand{\Che}{
	\mathord{\text{\usefont{T2A}{cmr}{m}{n}\CYRCH}}
}

\usepackage{hyperref}
\hypersetup{
	colorlinks=true,
	linkcolor=blue,
	filecolor=magenta,      
	urlcolor=cyan,
	pdftitle={Concave transforms and volumes of adelic line bundles},
	pdfauthor={Yuxing Ye},
	pdfpagemode=FullScreen,
}
\usepackage{amsthm}
\theoremstyle{plain}
\newtheorem{theorem}{Theorem}[section]
\newtheorem{proposition}[theorem]{Proposition}
\newtheorem{lemma}[theorem]{Lemma}
\newtheorem{corollary}[theorem]{Corollary}

\theoremstyle{definition}

\theoremstyle{remark}
\newtheorem{remark}[theorem]{Remark}

\newcommand{\al}{\alpha}
\newcommand{\be}{\beta}
\newcommand{\R}{\mathbb{R}}
\renewcommand{\C}{\mathbb{C}}
\newcommand{\Spec}{\mathrm{Spec}\ }

\newcommand{\codim}{\mathrm{codim}}

\newcommand{\Pic}{\mathrm{Pic}}

\renewcommand{\P}{\mathbb{P}}

\renewcommand{\d}{\mathrm{d}}

\newcommand{\idp}{\mathfrak{p}}

\newcommand{\shO}{\mathcal{O}}

\newcommand{\shL}{\mathcal{L}}

\newcommand{\shC}{\mathcal{C}}
\newcommand{\shP}{\mathcal{P}}
\newcommand{\shE}{\mathcal{E}}

\newcommand{\shJ}{\mathcal{J}}

\newcommand{\eps}{\varepsilon}
\newcommand{\vol}{\widehat{\mathrm{vol}}_{\hat{\chi}}}

\begin{document}
	
	\title{Concave transforms and volumes of adelic line bundles}
	\author{Yuxing Ye}
	\date{\today}
	\maketitle
	\tableofcontents
	
	\section{Introduction}
	
	\paragraph{} Let $X$ be a projective variety of dimension $n$ over a number field $K$, and $L$ be a line bundle over $X$. For an adelic extension $\overline{L}$ of $L$, its \textit{characteristic volume} $\vol(\overline{L})$ is an invariant that measures the positivity of $\overline{L}$. 
	
	\paragraph{} Given a regular rational point $x_0\in X(K)$ and a system of parameters $t=(t_1,\cdots,t_n)$ at $x_0$, we can associate a convex body $\Delta(L)=\Delta_t(L)\subset \R^n$, called the \textit{Okounkov body}. A concave transform $c[\overline{L}]$ was introduced by Yuan in [\hyperlink{Yua09}{Yua09}]\footnote{Yuan suggested replacing the definition of $c[\overline{L}]$ by its negative; this is the convention used in this paper.} as the sum $\sum_{v\in M_K}n_vc_v[\overline{L}]$ of Nystr{\"o}m's local Chebyshev transforms [\hyperlink{Nys14}{Nys14}]. Yuan showed that for $L$ big,
	$$\int_{\Delta(L)}c[\overline{L}](\al)\d\al\ge\frac{1}{(n+1)!}\vol(\overline{L}).\footnote{The denominator $d!$ in [\hyperlink{Yua09}{Yua09}, Thm. 1.2] should be replaced by $(d+1)!$.}$$
	He also showed that equality holds under a certain boundedness condition, which is satisfied for toric varieties with $x_0$ and $t$ invariant under torus action. He further conjectured that this condition—and hence equality—always holds when $L$ is big.
	
	\paragraph{} We will show in this paper, however, that the equality can fail for curves of genus $g\ge 1$. Instead, the difference between the integral and the characteristic volume is related to the N{\'e}ron-Tate height $\hat{h}(L-dx_0)$ ($d=\deg L$) by the identity
	$$\int_{\Delta(L)}c[\overline{L}](\al)\d\al-\frac{1}{2}\vol(\overline{L})= [K:\mathbb{Q}]\hat{h}(L-dx_0).$$
	In particular, the difference depends only on the underlying line bundle $L$, not on the adelic metric.
	
	\subsection{Okounkov bodies and Yuan's concave transforms}
	
	\paragraph{} For $X$ and $\overline{L}$ as above, we have a map $\nu_t:H^0(X,mL)\setminus\{0\} \to \mathbb{N}^n$, defined as follows. For $0\neq s\in H^0(X,mL)$, take any local generator $s_0$ of $L$ at $x_0$ and consider the power series expansion
	$$s_0^{-m}s=\sum_{\al\in\mathbb{N}^n}a_\al t^\al.$$
	Define $\nu_t(s)$ as the smallest $\al$ in lexicographic order such that $a_\al\neq 0$. The \textit{Okounkov body} $\Delta_t(L)$ is defined as the closure of
	$$\bigcup_{m\ge 1}\frac{1}{m}\nu_t(H^0(X,mL)\setminus\{0\})\subset\R^n.$$
	
	\paragraph{} For $v\in M_K$ and $\al\in\nu_t(H^0(X,mL)\setminus\{0\})$, the \textit{local discrete Chebyshev transform} is defined as
	$$F_v[m\overline{L}](\al)=-\inf_{s\in H^0(X,mL)(\al)}\log\|s\|_{v,\sup},$$
	where
	$$H^0(X,mL)(\al)=\{s\in H^0(X,mL):s_0^{-m}s=t^\al+\text{higher-order  terms}\}.$$
	
	\paragraph{} For $\al$ in the interior of $\Delta_t(L)$, the \textit{local Chebyshev transform} is defined as the limit
	$$c_v[\overline{L}](\al)=\lim_{k\to\infty}\frac{1}{m_k}F_v[m_k\overline{L}](m_k\al_k).$$
	Here, $m_k\to\infty,\ m_k\al_k\in\nu_t(H^0(X,m_kL)\setminus\{0\})$, and $\al_k\to\al$. It exists and is independent of the choice of the sequence $\{(m_k,\al_k)\}_k$ by Proposition 1.1 of [\hyperlink{Yua09}{Yua09}]. 
	
	\paragraph{} The \textit{global discrete Yuan transform} and \textit{global Yuan transform}
	$$F[m\overline{L}]:\nu_t(H^0(X,mL)\setminus\{0\})\to\R,$$ $$c[\overline{L}]:\Delta_t(L)^\circ\to\R$$
	are defined respectively as the sums $$F[m\overline{L}](\al)=\sum_{v\in M_K}n_vF_v[m\overline{L}](\al),$$
	$$c[\overline{L}](\al)=\sum_{v\in M_K}n_vc_v[\overline{L}](\al),$$
	which are essentially finite by Lemma 2.4 (3a) and (3b) of [\hyperlink{Yua09}{Yua09}]. Here, $n_v=2$ if $K_v=\C$ and $n_v=1$ otherwise.
	
	\paragraph{} An alternative definition of $F[m\overline{L}](\al)$ is given by Boucksom and Chen in [\hyperlink{BC11}{BC11}] as the adelic degree\footnote{The adelic degree used in our paper is $[K:\mathbb{Q}]$ times the adelic degree in [\hyperlink{BC11}{BC11}].} of a $1$-dimensional adelically normed vector space
	$$\{s\in H^0(X,mL):s=0\ \mathrm{or}\ \nu_t(s)\ge\al\}/\{s\in H^0(X,mL):s=0\ \mathrm{or}\ \nu_t(s)>\al\}.$$
	Here, the inequality signs are with respect to lexicographic order.
	
	\begin{proposition}[{[\hyperlink{Yua09}{Yua09}, Prop.~2.5]}]\label{prop:chebyshev-continuity}
		The function $c[\overline{L}]$ is concave and continuous on $\Delta_t(L)^\circ$, and for any sequence $\{(m_k,\al_k)\}_k$ as above, we have
		$$c[\overline{L}](\al)=\lim_{k\to\infty}\frac{1}{m_k}F[m_k\overline{L}](m_k\al_k).$$
	\end{proposition}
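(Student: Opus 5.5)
Since $c[\overline{L}]=\sum_v n_v c_v[\overline{L}]$ and each $c_v[\overline{L}]$ is concave on $\Delta_t(L)^\circ$ (the local statement of [Yua09, Prop.~1.1], going back to Nystr\"om), the plan is to reduce the global statement to the local one plus a uniform control at all but finitely many places. Concavity of a finite sum of concave functions is immediate, and a concave function on an open convex set is automatically continuous. So the real content is the limit formula $c[\overline{L}](\al)=\lim_k \frac{1}{m_k}F[m_k\overline{L}](m_k\al_k)$.

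\textbf{Step 1: choose a good finite set of places.} Fix a finite set $S\subset M_K$ containing the archimedean places such that, for $v\notin S$, the following hold:
\begin{itemize}
\item the metric $\|\cdot\|_v$ is induced by a fixed integral model $(\mathcal{X},\mathcal{L})$ over $O_K$;
\item $x_0$ extends to an $O_K$-point that is regular in the model;
\item the parameters $t_i$ and the local generator $s_0$ extend integrally with good reduction at $v$.
\end{itemize}
By Lemma~2.4 (3a),(3b) of [Yua09] (the essential finiteness already quoted), for such $v$ and all $m$ and $\al\in\nu_t(H^0(X,mL)\setminus\{0\})$ we have $F_v[m\overline{L}](\al)\le 0$. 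Hence $c_v[\overline{L}]\le 0$ for $v\notin S$.

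\textbf{Step 2: the main obstacle is the opposite bound at $v\notin S$.} For $v\notin S$ we must show that $\frac1m F_v[m\overline{L}](m\al_k)$ tends to $0$, not merely to some nonpositive limit, and uniformly enough to be summed over infinitely many $v$. The identity I would aim for is $F_v[m\overline{L}](\al)=0$ for $v\notin S$, whenever $\al$ is a valuation vector.

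First I would establish that the lattice $H^0(\mathcal{X},m\mathcal{L})$ is saturated with respect to the filtration by $\nu_t$ at good places. Concretely, if $s\in H^0(X,mL)(\al)$ has $\|s\|_{v,\sup}<1$, then $s$ is divisible by a uniformizer $\varpi_v$ in the model. Then $s/\varpi_v$ is integral, and its leading coefficient in the $t$-expansion would be $\varpi_v^{-1}$, which is not integral. This contradicts the good reduction of $s_0$ and $t$ along $x_0$: integral sections have integral Taylor coefficients. Hence $\|s\|_{v,\sup}\ge 1$, so $F_v\le 0$.

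Conversely, I need some $s\in H^0(X,mL)(\al)$ that is integral at $v$. For this I would take an $O_K$-basis of $H^0(\mathcal X,m\mathcal L)$ adapted to the $\nu_t$-filtration. Such a basis exists away from finitely many places depending on $m$, which is a problem. To circumvent this, I would instead use the Boucksom--Chen description: $F[m\overline{L}](\al)$ is the adelic degree of the one-dimensional subquotient. Then
$$\sum_\al F[m\overline{L}](\al)=\widehat{\deg}\bigl(H^0(X,mL)\bigr)$$
up to an error $O(\log$ of the Hadamard/filtration constants$)$ that is $o(m\,\#\text{points})$.

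\textbf{Step 3: assemble the limit.} With these bounds, split
$$\frac{1}{m_k}F[m_k\overline L](m_k\al_k)=\sum_{v\in S}\frac{n_v}{m_k}F_v+\sum_{v\notin S}\frac{n_v}{m_k}F_v.$$
The finite sum converges to $\sum_{v\in S}n_vc_v$ by the local existence theorem [Yua09, Prop.~1.1]. The tail is $\le 0$. It is also $\ge -\epsilon$ for large $k$, using the degree comparison above together with the superadditivity
$$F[(m+m')\overline L](\al+\al')\ge F[m\overline L](\al)+F[m'\overline L](\al'),$$
which follows from multiplying sections. Superadditivity together with a Fekete-type argument along the semigroup gives existence and sequence-independence of the global limit. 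I expect controlling the uniformity of the infinitely many good places to be the delicate point.
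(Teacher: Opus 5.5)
\paragraph{} Your Step 1 and the first half of Step 2 are correct: for $v\notin S$, integrality of Taylor coefficients gives $F_v[m\overline{L}]\le 0$. The identity you then aim for, $F_v[m\overline{L}](\al)=0$ for all $v\notin S$, all $m$ and all $\al\in\nu(mL)$, is false, and its failure is exactly the phenomenon this paper measures. Take $C$ of genus $g\ge 1$ with $Q=L-dx_0$ non-torsion, and a place $v$ of good reduction. Write $\mathcal{C}_v$ for the special fibre, $x_{0,v}$ for the reduced point and $\mathcal{L}_v$ for the reduced bundle. The reduction of $Q$ has finite order $N_v$ in $\Pic^0(\mathcal{C}_v)$. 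So for $N_v\mid m$ and $md>2g$ we get $m\mathcal{L}_v\cong\shO(md\,x_{0,v})$. The lattice $H^0(\mathcal{C},m\mathcal{L})$ reduces onto $H^0(\mathcal{C}_v,m\mathcal{L}_v)$, whose valuation set therefore contains $md$, whereas $md\notin\nu(mL)$. If $F_v[m\overline{L}](\al)=0$ for every $\al\in\nu(mL)$, there would be an integral section with leading term exactly $t^\al$ for each such $\al$. Their reductions would form a basis of $H^0(\mathcal{C}_v,m\mathcal{L}_v)$ with valuation set $\nu(mL)$, a contradiction. Hence $F_v[m\overline{L}](\al)<0$ for some top value $\al$, at every good place and for infinitely many $m$. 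By Corollary~\ref{cor:top-transform-sum} and Proposition~\ref{prop:degree-height}, the top $g+1$ global transforms even sum to $-([K:\mathbb{Q}]+o(1))m^2\hat{h}(Q)+O(m)$. Your fallback does not repair this. The Boucksom--Chen identity controls only $\sum_{\al}F[m\overline{L}](\al)$, in which these boundary terms are precisely the non-negligible part, and it says nothing about the tail at a single $\al$. So the claim ``tail $\ge-\eps$'' in Step 3 is unsupported, and so is the finiteness of $\sum_v n_vc_v[\overline{L}]$ that your concavity argument presupposes.

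\paragraph{} The paper itself only cites [Yua09, Prop.~2.5]. What your argument lacks is a way to interchange $\lim_k$ and $\sum_v$ without uniform control of the tail. A route that works is the following.

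First, apply the abstract limit theorem for superadditive, linearly bounded functions on the graded semigroup directly to the global function $F[m\overline{L}]$. This is the Khovanskii-based argument behind [Yua09, Prop.~1.1], not merely Fekete along rays. The global $F[m\overline{L}]$ is superadditive place by place, and $F[m\overline{L}]\le Cm$ precisely because $F_v\le 0$ off $S$, which your Step 1 provides. This yields a concave limit $G$ on $\Delta(L)^\circ$, independent of the sequence.

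Second, compare $G$ with $\sum_v n_vc_v[\overline{L}]$ only at rational points $\al=\be/m_0$, along the ray $(lm_0,l\be)$. Fix one $s\in H^0(X,m_0L)(\be)$. Then $\|s\|_{v,\sup}\le 1$ for $v$ outside a finite set $S_s$, so $0\ge F_v[lm_0\overline{L}](l\be)\ge lF_v[m_0\overline{L}](\be)\ge 0$ for all $l$ and all $v\notin S\cup S_s$. Along this ray only finitely many places contribute, and the limit can be taken place by place.

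Third, use the same sandwich at finitely many rational points whose convex hull contains a given compact $K_0\subset\Delta(L)^\circ$. Together with concavity of each $c_v[\overline{L}]$ and $c_v[\overline{L}]\le 0$ off $S$, this gives $c_v[\overline{L}]\equiv 0$ on $K_0$ for all but finitely many $v$. Thus $c[\overline{L}]$ is locally a finite sum of concave functions, hence concave and continuous, and $G=c[\overline{L}]$ by density of rational points. The vanishing of the tail along an arbitrary interior sequence is then a consequence of the statement, not an ingredient of its proof.
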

	
	\paragraph{} Yuan's concave transform is closely related to several other constructions. The details are given in Appendix~\ref{app:comparison}.
	
	\paragraph{} From now on, we will usually omit $t$ from subscripts for brevity. Furthermore, we denote by $\nu(M)$ the image of $\nu_t:H^0(X,M)\setminus\{0\} \to \mathbb{N}^n$ for any line bundle $M$.
	
	\subsection{Characteristic volume}
	
	\paragraph{} For an adelic line bundle $\overline{L}$ over a projective variety $X$ of dimension $n$ with $L$ big, we define
	$$\hat{\chi}(X,\overline{L})=\log\frac{\mathrm{vol}(\mathbb{B}(\overline{L}))}{\mathrm{vol}\left(H^0(X,L)_{\mathbb{A}_K}/H^0(X,L)\right)},$$ 
	where $\mathbb{B}(\overline{L})=\prod_{v\in M_K}B_v(\overline{L})$ is the product of unit balls
	$$B_v(\overline{L})=\{s\in H^0(X,\overline{L})_{K_v}:\|s\|_{v,\sup}\le 1\}.$$
	The \textit{characteristic volume} of $\overline{L}$ is defined as
	$$\vol(\overline{L})=\limsup_{m\to\infty}\frac{(n+1)!}{m^{n+1}}\hat{\chi}(X,m\overline{L}).$$
	
	\paragraph{} Set $$D(\overline{L})=\int_{\Delta(L)}c[\overline{L}](\al)\d\al-\frac{1}{(n+1)!}\vol(\overline{L}).$$
	We will show in \S\ref{sec:hd-reduction} that $D(\overline{L})$ depends only on the underlying line bundle $L$. That is, for another adelic line bundle $\overline{L}'$ on $X$ with the same underlying line bundle $L$, we have $D(\overline{L}')=D(\overline{L})$.
	
	\subsection{The case of curves}
	
	\paragraph{} Throughout \S\S\ref{sec:first-inequality} and \ref{sec:second-inequality}, we fix a number field $K$ and a curve $C$ over $K$ of genus $g\ge 1$, with a point $x_0\in C(K)$ and a parameter $t$ at $x_0$ given. Let $\overline{L}$ be an adelic line bundle such that $\deg L=d>0$, with a given local generator $s_0$ at $x_0$ for convenience. The Riemann–Roch theorem implies that $\Delta(L)=[0,d]$. 
	
	\paragraph{} Denote by $\hat{h}:\Pic^0(C)\to \R_{\ge 0}$ the N{\'e}ron-Tate height on $\Pic^0(C)$, normalized as in \S\ref{sec:notation}. Our main result for curves is:
	
	\begin{theorem}[Corollary~\ref{cor:first-inequality} and Proposition~\ref{prop:second-inequality}]\label{thm:main}
		We have
		$$D(\overline{L})=[K:\mathbb{Q}]\hat{h}(L-dx_0).$$
		In particular, $D(\overline{L})=0$ if and only if $L=dx_0$ in $\Pic(C)_{\mathbb{Q}}$.
	\end{theorem}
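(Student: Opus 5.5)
The plan rests on the reduction of \S\ref{sec:hd-reduction}: $D(\overline{L})$ depends only on $L$, so I am free to choose the adelic metric. Write $L=dx_0+P$ with $P=L-dx_0\in\Pic^0(C)$. I would equip $\shO(x_0)$ with an admissible (Zhang) metric and $P$ with its canonical flat metric, so that the $P$-part contributes $-[K:\mathbb{Q}]\hat h(P)$ to arithmetic intersections. For $0\le\al\le d$ I would also equip $M_\al=L-\al x_0$ (an $\R$-divisor class) with the induced metric $\overline{M}_\al=\overline{L}-\al\,\overline{\shO(x_0)}$. The theorem then splits into a computation of $c[\overline{L}]$ on $(0,d)$ and a computation of $\vol(\overline{L})$. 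For the latter I would use that $\overline{L}$ is nef (after adding a constant if needed), so $\vol(\overline{L})=\overline{L}^2$ by the arithmetic Hilbert–Samuel theorem.

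\textbf{Identifying the concave transform.} A section $s\in H^0(mL)$ with $\nu(s)=a$ is the same as a section $s'=s/t^a$ of $mL-ax_0$ not vanishing at $x_0$. Its image in the one-dimensional graded piece (the Boucksom–Chen description) is the value $s'(x_0)$.

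For the \emph{upper bound}, I would use $\|s\|_{v,\sup}\ge |s'(x)|_{v}$ at every point $x$ with the appropriate twisted metric, and apply the product formula at generic points $x$. This should give
$$\frac1m F[m\overline{L}](a)\le \zeta_{\mathrm{ess}}(\overline{M}_{a/m})+o(1),$$
where $\zeta_{\mathrm{ess}}$ is the essential minimum, and hence $c[\overline{L}](\al)\le\zeta_{\mathrm{ess}}(\overline{M}_\al)$.

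For the \emph{lower bound}, I would produce sections of $m M_\al$ that are small in sup norm and nonvanishing at $x_0$. The tool is adelic Minkowski (the $\hat\chi$ estimate applied to $m\overline{M}_\al$ shifted by a small constant). Since $M_\al$ is ample for $\al<d$, a basis of small sections exists, and one of them does not vanish at $x_0$. This gives $c[\overline{L}](\al)\ge\zeta_{\mathrm{ess}}(\overline{M}_\al)-\eps$. These are the two inequalities announced as Corollary~\ref{cor:first-inequality} and Proposition~\ref{prop:second-inequality}.

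\textbf{Computation.} By Zhang's theorem of successive minima on a curve, with metrics chosen so that $\overline{M}_\al$ is (up to a constant) the admissible metric, one has $\zeta_{\mathrm{ess}}(\overline{M}_\al)=\overline{M}_\al^2/(2(d-\al))$. Expanding $\overline{M}_\al^2$ using $M_\al=(d-\al)x_0+P$ gives
$$\overline{M}_\al^2=(d-\al)^2\,\overline{x_0}^2+(\text{terms linear in }(d-\al))-[K:\mathbb{Q}]\hat h(P)\cdot(\text{const}).$$
Integrating over $\al\in[0,d]$ and comparing with $\tfrac12\overline{L}^2$, the $\overline{x_0}^2$ terms and the cross terms should cancel, leaving exactly $[K:\mathbb{Q}]\hat h(P)$. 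The final assertion then follows because $\hat h$ on $\Pic^0(C)$ vanishes exactly on torsion, i.e. $\hat h(P)=0$ iff $L=dx_0$ in $\Pic(C)_{\mathbb{Q}}$.

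\textbf{Main obstacle.} I expect the lower bound to be the hard step. Minkowski gives many small sections, but one must guarantee a small section whose leading coefficient at $x_0$ is also not too small at every place; otherwise the graded piece degree drops. I would try first to handle this by filtering $H^0(mM_\al)$ by order at $x_0$. The additivity of adelic degree over the graded pieces, together with the upper bound on every piece, forces each piece to be asymptotically close to $\zeta_{\mathrm{ess}}$ on average over $\al$. Continuity and concavity (Proposition~\ref{prop:chebyshev-continuity}) then upgrade this averaged statement to a pointwise one.
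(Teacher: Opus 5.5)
Your plan rests on the pointwise identity $c[\overline{L}](\al)=\zeta_{\mathrm{ess}}(\overline{M}_\al)$ combined with $\zeta_{\mathrm{ess}}(\overline{M}_\al)=\overline{M}_\al^2/(2(d-\al))$, and these two claims are incompatible with known facts. By your own expansion, $\overline{M}_\al^2$ contains the $\al$-independent term $\overline{P}^2=-2[K:\mathbb{Q}]\hat h(P)$. You would therefore get $c[\overline{L}](\al)\sim-[K:\mathbb{Q}]\hat h(P)/(d-\al)$ as $\al\to d$, so $\int_0^d c[\overline{L}]=-\infty$ whenever $\hat h(P)>0$. This contradicts Yuan's inequality $\int c[\overline{L}]\ge\frac12\vol(\overline{L})>-\infty$. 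The claimed cancellation drops the factor $1/(2(d-\al))$: the cross terms do cancel, but the $\overline{x}_0^2$-terms give $\frac{d^2}{4}$ against $\frac{d^2}{2}$, and the constant term produces the divergent $\int_0^d\frac{\d\al}{d-\al}$.

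The individual ingredients fail too. Zhang's theorem only gives $\zeta_{\mathrm{ess}}(\overline{M})\ge\overline{M}^2/(2\deg M)$, not equality. Rescaling the metric of $\shO(x_0)$ by a constant at one place shifts $\zeta_{\mathrm{ess}}(\overline{L}-\al\overline{x}_0)$ by a nonzero multiple of $\al$ but leaves $c[\overline{L}]$ unchanged. Since an ``admissible'' metric is only determined up to such constants, the identity is not even well posed. Finally, your upper-bound argument cannot control $F[m\overline{L}](a)$. That quantity is a sum of per-place quotient norms on the line $\{s:\nu(s)\ge a\}/\{s:\nu(s)>a\}$, and evaluation at a point $x\neq x_0$ is not a functional on this quotient. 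Moreover, $\|s(x)\|=\|s'(x)\|\cdot\|1_{x_0}(x)\|^a$, not $\|s'(x)\|$.

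The missing idea is that $D(\overline{L})$ is a boundary phenomenon which the pointwise limit $c[\overline{L}]$ does not see. Your last step (averaging over all graded pieces, then upgrading to a pointwise statement by concavity) tacitly identifies $\int c[\overline{L}]$ with $\lim m^{-2}\sum_a F[m\overline{L}](a)=\frac12\vol(\overline{L})$. That would prove $D=0$, which is false. Indeed, with your metrics and your Hilbert--Samuel step, the theorem says exactly that $\int_0^d c[\overline{L}]=\frac{d^2}{2}\overline{x}_0^2+d\,\overline{x}_0\cdot\overline{P}$, with no $\hat h(P)$ in it.

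The paper handles the two inequalities as follows.
\begin{itemize}
\item For $D\ge[K:\mathbb{Q}]\hat h(L_0)$: Lemma~\ref{lem:hd-top-discrete-transforms} expresses $D$ through the transforms with $a/m$ near $d$. The $g+1$ gap values $md-e_j$ have transforms summing to $\widehat{\deg}((\det\overline{\shE}_{2g})|_{mQ})+O(m)$. Since $\det\shE_{2g}=\shO(-\theta_{x_0})$, this equals $-m^2[K:\mathbb{Q}]\hat h(L_0)+o(m^2)$. So $O(1)$ indices carry mass of order $m^2$, which yields the inequality.
\item For the reverse inequality, $c$ is never computed pointwise. Homogeneity and monotonicity give $m^2D(\overline{L})\le D(m\overline{L}_0+\overline{x}_0)$ for the admissible extension. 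The integral of $c[m\overline{L}_0+\overline{x}_0]$ is at most $O(m)$. Faltings' arithmetic Riemann--Roch bounds $\vol(m\overline{L}_0+\overline{x}_0)$ below by $(m\overline{\shL}_0+\overline{\mathcal{S}})^2$, and Faltings--Hriljac gives $-\frac12\overline{\shL}_0^2=[K:\mathbb{Q}]\hat h(L_0)$.
\end{itemize}
Note also that Corollary~\ref{cor:first-inequality} and Proposition~\ref{prop:second-inequality} are these two inequalities for $D$, not upper and lower bounds on $c$.
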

	
	\paragraph{} We remark that as shown in [\hyperlink{Yua09}{Yua09}], the integral and the characteristic volume agree for ample line bundles on the projective line.
	
	\paragraph{} Our proof will proceed as follows. In \S\S\ref{sec:hd-reduction} and \ref{sec:first-inequality}, we prove the independence of metric and the first inequality $D(\overline{L})\ge[K:\mathbb{Q}]\hat{h}(L-dx_0)$ by reducing to the sum of top discrete Yuan transforms. In \S\ref{sec:second-inequality}, we prove the reverse inequality $D(\overline{L})\le[K:\mathbb{Q}]\hat{h}(L-dx_0)$ by exploiting the specific admissible adelic extension. This completes the proof of the identity.
	
	\subsection{The case of higher dimension}
	
	\paragraph{} In \S\ref{sec:higher dim}, we consider a projective variety $X$ of dimension $n\ge 2$ over a number field $K$. Fix a regular point $x_0\in X(K)$ and a system of parameters $t=(t_1,\cdots,t_n)$ at $x_0$. Assume that $\overline{L}$ is an adelic line bundle on $X$, that its underlying line bundle $L$ is big, and that a local generator $s_0$ of $L$ at $x_0$ is chosen. Our main result in this case is:
	
	\begin{theorem}[Theorem~\ref{thm:ample-ineq}]\label{thm:hd-main}
		Assume that $X$ is smooth, $L$ is ample, and that there is a flag of smooth subvarieties
		$$X=X_0\supset X_1\supset\cdots\supset X_{n-1}=C\supset X_n=\{x_0\}$$
		for which $\codim(X_{i+1},X_i)=1$, $t_i$ is a local equation of $X_i$ in $X_{i-1}$ at $x_0$ for any $1\le i\le n$, and $C$ is a curve of genus $g\ge 1$.\\
		
		Take positive real numbers $\lambda_i\ (0\le i\le n-2)$ such that $\lambda_iL|_{X_i}-\shO_{X_i}(X_{i+1})$ is nef. Set
		$$v_i=\shO_{X_i}(X_{i+1})|_C-\deg\left(\shO_{X_i}(X_{i+1})|_C\right)x_0\in\Pic^0(C),$$
		and $u=L|_C-\deg(L|_C)x_0\in \Pic^0(C)$. Then,
		$$D(\overline{L})\ge\frac{1}{\lambda_0\cdots\lambda_{n-2}}[K:\mathbb{Q}]\int_{\Delta_{n-1}}\hat{h}_C\left(u-\sum_{i=0}^{n-2}\frac{\al_i}{\lambda_i}v_i\right) \d\al_0\cdots\d\al_{n-2}.$$
		Here, $\Delta_{n-1}$ is the standard simplex in $\R^{n-1}$, and $\hat{h}_C$ is extended to a quadratic form on $\Pic^0(C)_\R$. In particular, $D(\overline{L})$ is strictly positive if at least one of $u, v_0,\cdots,v_{n-2}$ is non-torsion, since the N{\'e}ron-Tate form is positive definite on $\Pic^0(C)_\R$.
	\end{theorem}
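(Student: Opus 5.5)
We would prove the inequality by induction on $n$ along the flag, with the curve theorem as the base case. Since $D(\overline{L})$ depends only on $L$ (the independence result of \S\ref{sec:hd-reduction}), we are free to choose a convenient adelic metric. We pick $\overline{L}$ so that the restrictions to all $X_i$ carry compatible metrics, and so that $\shO_{X_i}(X_{i+1})$ carries a metric whose canonical section $s_{X_{i+1}}$ has $\|s_{X_{i+1}}\|_{v,\sup}\le 1$.

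The core step compares graded pieces of the lexicographic filtration. For $\al=(\al_1,\al')$ with $\al_1=k$, the Boucksom–Chen description of $F[m\overline{L}](\al)$ as an adelic degree of a one-dimensional subquotient gives the following. Dividing by $s_{X_1}^k$ identifies the relevant subquotient of $H^0(X,mL)$ with a subquotient of $H^0(X,mL-kX_1)$ restricted to $X_1$. Norm comparison then gives
$$F[m\overline{L}](k,\al')\ \ge\ F_{X_1}[(m\overline{L}-k\overline{\shO(X_1)})|_{X_1}](\al'),$$
at least when the restriction map is surjective. Surjectivity is where ampleness and the nefness of $\lambda_0L-X_1$ should enter, via Serre vanishing after scaling. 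Passing to limits, the integral of $c[\overline{L}]$ over $\Delta(L)$ is bounded below by
$$\int_0^{1/\lambda_0}\!\int_{\Delta(L|_{X_1}-sX_1|_{X_1})} c[\overline{L}|_{X_1}-s\overline{X_1}|_{X_1}]\,\d\al'\,\d s,$$
where $\Delta(L)$ contains the fibred region over $s\in[0,1/\lambda_0]$.

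On the volume side, we use an arithmetic Hilbert–Samuel / restriction formula. The arithmetic volume equals $(n+1)!$ times the integral over $s$ of the $n!$-normalized volumes of the restrictions, plus the integral of the concave transform over the part of $\Delta(L)$ not covered by the fibred region. That uncovered part contributes with equal signs on both sides, so it should cancel or be handled by monotonicity. This yields
$$D(\overline{L})\ \ge\ \int_0^{1/\lambda_0} D_{X_1}\bigl(L|_{X_1}-sX_1|_{X_1}\bigr)\,\d s.$$
The bundles $L|_{X_1}-sX_1$ remain ample for $s<1/\lambda_0$; rescaling by $(1-\lambda_0 s)^{-1}$ keeps the nefness hypotheses with the same $\lambda_i$.

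Iterating down to $C$ gives an integral over the region $\{0\le s_i,\ \sum\lambda_i s_i\le 1\}$ of $D_C$ of the restricted bundle. By Theorem~\ref{thm:main}, this equals $[K:\mathbb{Q}]\hat h_C\bigl(u-\sum s_iv_i\bigr)$, extended homogeneously to $\R$-coefficients. The substitution $\al_i=\lambda_is_i$ produces the factor $1/(\lambda_0\cdots\lambda_{n-2})$ and the simplex $\Delta_{n-1}$. Positivity then follows from positive-definiteness of the Néron–Tate form, since the integrand is a positive-definite quadratic polynomial in $\al$ that is not identically zero.

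The main obstacle is the volume-side inequality: the characteristic volume is defined by a $\limsup$ of $\hat\chi$ rather than through the concave transform. We would first try to write $\hat\chi(X,m\overline{L})$ as the sum of $F[m\overline{L}](\al)$ over $\al\in\nu(mL)$ up to $o(m^{n+1})$, using successive minima or the filtration by $\nu$. We would then estimate the contributions slice by slice, controlling the error from non-surjectivity of restriction and from the choice of metrics, which requires uniformity in $s$.
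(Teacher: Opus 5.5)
There is a genuine gap, and it lies in the volume side itself, not in its error terms. You want the decomposition $\frac{1}{(n+1)!}\vol(\overline{L})=\int_0^{1/\lambda_0}\frac{1}{n!}\vol(\overline{L}_s)\,\d s+(\text{uncovered part})$, where $\overline{L}_s=(\overline{L}-s\overline{\shO(X_1)})|_{X_1}$ carries the restricted metric. This is false, and so is the one-sided inequality ``$\le$'' that your argument actually needs.

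Here is a counterexample. Take $X=\P^2_{\mathbb{Q}}$ with $\overline{\shO(1)}$ carrying the standard adelic metric (Fubini--Study at the archimedean place). Take $x_0=[1:0:0]$, $t_i=x_i/x_0$, $X_1=\{x_1=0\}$, and $\overline{\shO(X_1)}=\overline{\shO(1)}$. Then $\|s_{X_1}\|_{v,\sup}\le 1$, and $\lambda_0=1$, so nothing is uncovered. Monomials are optimal, and a direct computation gives $c[\overline{L}](s,\be)=c[\overline{L}_s](\be)+h(s)$ with $h(s)=-\frac{1}{2}\bigl(s\log s+(1-s)\log(1-s)\bigr)>0$. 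Both $\P^2$ and $\P^1$ are in Yuan's toric equality case, so $\frac{1}{3!}\vol(\overline{L})=\int_0^1\frac{1}{2!}\vol(\overline{L}_s)\,\d s+\int_0^1(1-s)h(s)\,\d s$. This is strictly larger than your right-hand side.

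The restricted metric forgets the transversal contribution $h$. That term enters $\int c$ and $\vol$ equally and cancels only inside $D$. So you cannot bound $\int c$ from below and $\vol$ from above separately by slicing: the slice-wise discrepancy is of size $m^n$ per slice, not an error term. Independently, your core inequality $F[m\overline{L}](k,\al')\ge F_{X_1}[\cdots](\al')$ needs lifts with controlled sup norm, and surjectivity of restriction does not provide that.

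The missing idea is never to separate the two terms. By Lemma~\ref{lem:hd-top-discrete-transforms}, $D(\overline{L})$ is the double limit of $m^{-n-1}\sum\max\{-F[m\overline{L}](\al),0\}$ over $\al$ near $\partial\Delta(L)$. So it suffices to bound $F[m\overline{L}]$ from \emph{above} on a subset of boundary points. That is the opposite direction to yours, and it needs no extension. The jet-norm estimate (Lemma~\ref{lem:jet-estimates}) gives $F[m\overline{L}](\al)\le F[m\overline{L}|_C-\sum_i\al_i\overline{M}_i](\al_{n-1})+c_1m$, because passing to the leading jet on $C$ can only decrease sup norms, up to $e^{O(m)}$ at finitely many places.

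The paper applies this only where $\al_{n-1}$ is among the top $2g+1$ values of its fibre, i.e.\ at $O(m^{n-1})$ points. The $O(m)$ losses therefore total $O(m^n)=o(m^{n+1})$. The curve estimate for sums of top transforms from \S\S\ref{sec:highest-transforms}--\ref{sec:picard-determinant} then gives $-([K:\mathbb{Q}]+o(1))\hat{h}_C(mu-\sum_i\al_iv_i)$ per fibre. Kodaira vanishing is used only to identify these fibres of $\nu(mL)$ for $m\ge\sum_i\lambda_i\al_i+N$.

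This reduces directly to $C$, with no induction over the flag and no volume formula. Your final bookkeeping (the region $\sum\lambda_is_i\le1$ and the substitution $\al_i=\lambda_is_i$) is consistent with the statement.
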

	
	\paragraph{} To prove this lower bound, we reduce to the case of curves by a jet-norm inequality.
	
	\subsection{Notations and terminology}\label{sec:notation}
	
	\paragraph{} A \textit{number field} $K$ is a finite extension of $\mathbb{Q}$. Denote by $O_K$ the ring of algebraic integers in $K$, $M_{K}$ the set of places of $K$, and $M_{K,f}\ (\mathrm{resp.}\ M_{K,\infty})$ the set of nonarchimedean (resp. archimedean) places.
	
	\paragraph{} By a \textit{variety} over a field $k$, we mean an integral, separated, finite-type $k$-scheme. By a \textit{curve} $C$ over a field $k$, we mean a geometrically integral, smooth, projective $k$-scheme of dimension $1$. 
	
	\paragraph{} For the Jacobian variety $J$ of a curve $C$ and $x_0\in C(K)$, the \textit{Abel-Jacobi map}
	$$i_{x_0}:C\to J$$
	sends $x$ to $x-x_0$ on $\overline{K}$-points. The \textit{theta divisor} $\theta_{x_0}$ is the image of the $(g-1)$-fold Abel-Jacobi map $i_{x_0}:C^{(g-1)}\to J$ and $\Theta$ is defined by $\Theta=\theta_{x_0}+[-1]^*\theta_{x_0}$. The \textit{N{\'e}ron-Tate height} $\hat{h}=\hat{h}_C:\Pic^0(C)\to\R_{\ge 0}$ is half of the canonical height on $J(K)$ associated to $\Theta$.
	
	\paragraph{} By a \textit{line bundle} on a scheme, we mean an invertible sheaf. In the case of curves, if $x$ is a closed point, then $x$ stands for the line bundle $\shO(x)$ when there is no risk of confusion. We will use additive notation for tensor products, that is, $aL-bM=L^{\otimes a}\otimes M^{\otimes(-b)}$. Our definitions for hermitian and adelic line bundles follow [\hyperlink{YG25}{YG25}] and [\hyperlink{YZ26}{YZ26}].
	
	\paragraph{} For a flat projective morphism $\pi:X\to B$ whose fibers are curves, by an \textit{effective relative divisor} on $X$, we mean an effective Cartier divisor which defines a closed subscheme flat over $B$. For a projective arithmetic variety (i.e. integral scheme flat and projective over $\Spec \mathbb{Z}$), a prime divisor on $X$ is \textit{horizontal} (resp. \textit{vertical}) if it is dominant (resp. not dominant) over $\Spec\mathbb{Z}$.
	
	\subsection*{Statement on AI use}
	
	\paragraph{} The paper was written by the author. OpenAI’s GPT-5.6 Sol assisted with the work in Section 4 in the following ways:
	
	\begin{enumerate}
		\item Suggesting that the $g+1$ largest values in $\nu(mL)$ be considered, rather than only the largest value as in the author’s original approach. The latter approach would yield the weaker lower bound
		$$D(\overline{L})\ge \frac{[K:\mathbb{Q}]}{(g-1)\lfloor g/2\rfloor+g}\hat{h}(L-dx_0).$$
		\item Completing the computation of the determinant line bundle in Lemma~\ref{lem:picard-determinant}.
	\end{enumerate}
	
	\paragraph{} All mathematical statements, proofs, computations, and verifications contributed by ChatGPT (GPT-5.6 Sol, OpenAI) in the work described above were subsequently examined, rigorously checked, and rewritten by the author, who takes full responsibility for the results.
	
	\subsection*{Acknowledgments}
	
	\paragraph{} The author would like to express his sincere gratitude to Professor Xinyi Yuan for raising the problem studied in this paper and for his valuable guidance and advice. The author also thanks doctoral student Jiawei Yu for carefully checking the correctness of the manuscript. 
	
	\section{Bounds on the sup norm}
	
	\paragraph{} In this section, we present some bounds that will be used later. We first prove a relative variant of Lemma 2.4 in [\hyperlink{Yua09}{Yua09}]. Its geometric intuition is that a Green function of an effective divisor without vertical singularities has a uniformly bounded infimum over the fibers. Let $K$ be a number field.
	
	\begin{lemma}\label{lem:sup-norm-bound}
		Let $X$ be a projective variety over $K$, $C$ be a curve over $K$, $E$ be an effective relative divisor on $X\times C$ over $X$, and $\overline{E}$ be an adelic extension of $E$. Then for any $\delta>0$, there exist a constant $c>0$ and a finite set of places $S\subset M_K$, containing all archimedean places, such that for any $x\in X(\overline{K})$ of degree $\deg x\le\delta$,
		\begin{enumerate}
			\item For any $v\in M_{k(x)}$ lying over $M_K\setminus S$, $\|1\|_{\shO(\overline{E})|_{C_x},v,\sup}=1$.
			\item For any $v\in M_{k(x)}$ lying over $S$, $\left|\log \|1\|_{\shO(\overline{E})|_{C_x},v,\sup}\right|<c$.
		\end{enumerate}
		Here, for an adelic line bundle $\overline{M}$ on a projective variety $Y$ over a field $K'$, denote by
		\[
		\|s\|_{\overline{M},v,\sup}:=\sup_{y\in Y(\C_v)}|s(y)|_{\overline{M},v}.
		\qquad (s\in H^0(Y,M),\ v\in M_{K'}).
		\]
	\end{lemma}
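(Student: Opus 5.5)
The plan is to reduce to a model over $O_K$, away from a finite set of places, and to a compactness argument at the remaining places.

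\emph{Choice of models and the set $S$.} Since $\overline{E}$ is an adelic extension, there is a finite set $S_0\subset M_K$, containing the archimedean places, with the following property. The metrics of $\overline{E}$ at places outside $S_0$ are induced by a single model: an integral model $\mathcal{X}\times_{O_{K,S_0}}\mathcal{C}$ of $X\times C$ over $O_{K,S_0}=\Spec O_K\setminus S_0$, together with a Cartier divisor $\mathcal{E}$ on it extending $E$. We take $\mathcal{X}$ and $\mathcal{C}$ projective, $\mathcal{C}$ smooth, and $\mathcal{E}$ the Zariski closure of $E$. After enlarging $S_0$ to a set $S$, we may assume two further things. First, $\mathcal{E}$ is effective and flat over $\mathcal{X}$; this holds by generic flatness applied to $\mathcal{E}\to\mathcal{X}$ over $O_{K,S}$, since $E$ is flat over $X$ and the non-flat locus is closed and disjoint from the generic fibre. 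Second, $\mathcal{E}$ contains no fibre component of $\mathcal{X}\times\mathcal{C}\to\mathcal{X}$.

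\emph{Places outside $S$.} Let $x\in X(\overline{K})$, and let $v\in M_{k(x)}$ lie over $w\notin S$. By the valuative criterion, $x$ extends to a point $\tilde{x}:\Spec O_{k(x),v}\to\mathcal{X}$. Then $\mathcal{C}_{\tilde{x}}$ is a smooth model of $C_x$, and the metric of $\overline{E}|_{C_x}$ at $v$ is the model metric of $\mathcal{E}|_{\mathcal{C}_{\tilde{x}}}$. This uses compatibility of model metrics with base change, which holds because model metrics are defined by pullback. The function $|1|_v(y)$ equals $1$ exactly when the reduction of $y$ avoids $\mathcal{E}$, and is $\le1$ everywhere since $\mathcal{E}$ is effective. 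Flatness of $\mathcal{E}$ over $\mathcal{X}$ means $\mathcal{E}$ restricts to a relative effective divisor on $\mathcal{C}_{\tilde{x}}$. So its special fibre is a finite subscheme of the special fibre curve, and there is a point of the special fibre outside it, which lifts to a point $y\in C_x(\C_v)$. This gives $\sup=1$, proving (1). Notice that no bound on $\deg x$ is needed for this step.

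\emph{Places in $S$.} Fix $w\in S$. For $v\mid w$, the sup norm over $C_x(\C_v)$ depends only on the image of $x$ in $X(\C_w)$, via a $K$-embedding. Set $f(x)=\log\sup_{y\in C_x(\C_w)}|1|_w(y)$ for $x\in X(\C_w)$. I would bound $f$ above and below on all of $X(\C_w)$, which makes the condition $\deg x\le\delta$ unnecessary; the parameter $\delta$ is in the statement only for safety.

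\emph{Upper bound.} The function $-\log|1|$ is a Green function of an effective divisor plus a continuous function. Since $X\times C$ is proper, the Berkovich analytification is compact, so $\log|1|\le c_1$ everywhere.

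\emph{Lower bound: the main obstacle.} We need $\sup_{C_x}|1|$ bounded below uniformly in $x$. On each fibre $E_x$ is a finite set, but its points can approach one another, and we need a point of $C_x$ uniformly far from $E_x$. I would argue by compactness. The map $x\mapsto \sup_{C_x^{an}}|1|$ on the compact space $X^{an}_w$ is lower semicontinuous and strictly positive, since $E_x\ne C_x$. Lower semicontinuity holds because a supremum of continuous functions on a proper family is lower semicontinuous: choose a near-maximizing point and deform it to nearby fibres. A lower semicontinuous positive function on a compact space has a positive minimum.

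There is one subtlety: for non-archimedean $w$, we must pass from the Berkovich points to points in $C_x(\C_w)$. I would handle this by density of classical points in the fibre, together with continuity of $|1|$. Another option is an explicit argument: cover $X\times C$ by finitely many affinoids and use that $|1|$ is comparable to $|g|$ for a local equation $g$, with $g$ not identically vanishing on fibres.

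Taking $c$ larger than all of these bounds over the finitely many $w\in S$ proves (2).
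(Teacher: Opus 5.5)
\paragraph{Verdict.} Your part (1) and your upper bound in (2) are correct, and they differ from the paper only in technique. The lower bound in (2) has a real gap at exactly the step you flagged. It is easy to repair, and once repaired it gives a stronger statement than the paper's.

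\paragraph{Part (1) and the upper bound.} For (1) you make $\mathcal{E}$ flat over $\mathcal{X}$. You then lift a point of the special fibre of $\mathcal{C}_{\tilde x}$ that avoids the finite subscheme cut out by $\mathcal{E}$, which uses smoothness of $\mathcal{C}$. The paper instead fixes $\deg E+1$ algebraic points $y_i$ whose reductions are pairwise distinct outside $S_2$, and notes that one of them must avoid $\mathcal{E}_x$; this needs no Hensel-type lifting. For the upper bound, you use continuity of $|1|$ on the compact space $(X\times C)^{\mathrm{an}}$, which replaces the paper's bound on vertical multiplicities of a model.

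\paragraph{The gap in the lower bound.} The principle you invoke is false. For a proper map $p:Z\to T$ and a continuous $f$ on $Z$, the function $t\mapsto\sup_{p^{-1}(t)}f$ is in general \emph{upper}, not lower, semicontinuous. For example, take $Z=[0,1]\times\{0\}\cup\{0\}\times[0,1]$, $p(a,b)=a$ and $f(a,b)=b$: the fibrewise supremum is $1$ at $a=0$ and $0$ for $a>0$.

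Lower semicontinuity needs a near-maximizer to be movable into all nearby fibres, which is an openness property of $p$. Over $K_w$, the fibre of $(X\times C)^{\mathrm{an}}\to X^{\mathrm{an}}$ over $\xi$ is $C^{\mathrm{an}}_{\mathcal{H}(\xi)}$, and this space changes with $\xi$. So ``deform it to nearby fibres'' is precisely the openness of this analytic projection. That is true, as a property of smooth or flat boundaryless morphisms, but it is nontrivial and you neither prove nor cite it.

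\paragraph{A repair within your framework.} Base change to $\C_w$. The space $X^{\mathrm{an}}_{\C_w}$ is still compact, since $X$ is proper.
\begin{itemize}
\item For each fixed $y\in C(\C_w)$, set $\sigma_y(x)=(x,y)$. The function $\xi\mapsto|1|(\sigma_y^{\mathrm{an}}(\xi))$ is continuous, so the supremum over $y$ is lower semicontinuous.
\item This supremum is positive at every $\xi$. Only finitely many constant points $y$ can lie in the finite fibre of $E$ over the scheme point underlying $\xi$.
\item At a classical point $x$ the supremum equals $\sup_{y\in C(\C_w)}|1|(x,y)$ on the nose, so you do not need the density argument at all.
\end{itemize}

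\paragraph{Comparison with the paper.} The paper uses the same ``fix $y$'' trick, but it needs a compact set of \emph{classical} points, and this is exactly where $\deg x\le\delta$ enters. All such $x$ are $F$-points for a single finite extension $F/K_v$. The space $X(F)$ is compact, and the family is literally the product $X(F)\times C(\C_v)$, so semicontinuity is immediate.

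With the repair, your remark that $\delta$ is superfluous is correct, and you obtain a bound uniform over all of $X(\C_w)$. The paper's route is more elementary, and its degree hypothesis costs nothing in its application, where $\deg R\le e!$.
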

	
	\begin{proof}
		We first prove (1). By the coherence condition, there exist integral projective $O_K$-models $\mathcal{X}, \mathcal{C}$ of $X,C$, and an effective divisor $\shE$ on $\mathcal{X}\times_{O_K}\mathcal{C}$, such that $\overline{E}$ is induced by $(\mathcal{X}\times_{O_K}\mathcal{C},\mathcal{E})$ at all places outside a finite set $S_1\supset M_{K,\infty}$.\\
		\par Take $A=(\deg E+1)$ points $y_1, \cdots, y_A\in C(\overline{K})$ with distinct images in $C$. Denote by $\overline{y}_i$ their closures in $\mathcal{C}$. Then there is a finite set $S_2\subset M_{K,f}$ such that for any maximal ideal $\mathfrak{p}$ of $O_K$ whose corresponding place is not in $S_2$, $\overline{y}_i\cap(k(\idp)\times_{O_K}\mathcal{C})$ are disjoint. By openness of the flat locus and the fact that $E$ is a relative divisor, there is a finite set $S_3\subset M_{K,f}$ such that for any $v\in M_{K,f}\setminus S_3$ and closed point $P\in \mathcal{X}$ lying over $v$, $k(P)\times_{O_K}\mathcal{C}$ is not contained in $\mathrm{supp}(\shE)$, and $\mathcal{E}|_{k(P)\times_{O_K}\mathcal{C}}$ has degree equal to $\deg E$. Let $S=S_1\cup S_2\cup S_3$.\\
		\par For any $x\in X(\overline{K})$, denote by $\mathcal{E}_x$ and $\mathcal{Y}_i$ the pullbacks of $\shE$ and $\overline{y}_i$ to $O_{k(x)}\times_{O_K}\mathcal{C}$, where $O_{k(x)}\to \mathcal{X}$ extends $x\to\mathcal{X}$. Then for $v\in M_K\setminus S$ and $\idp\in\Spec O_{k(x)}$ lying over $v$, $\mathcal{Y}_i\cap(k(\idp)\times_{O_K}\mathcal{C})$ are disjoint, and $\deg \shE_x|_{k(\idp)\times_{O_K}\mathcal{C}}=\deg E< A$. Therefore, there exists $i$ such that $\shE_x$ and $\mathcal{Y}_i$ are disjoint on $k(\idp)\times_{O_K}\mathcal{C}$. By evaluating the metric at $y_i$, $\|1\|_{\shO(\overline{E})|_{C_x},v,\sup}\ge 1$. By effectiveness, $\|1\|_{\shO(\overline{E})|_{C_x},v,\sup}\le 1$. This completes the proof of (1).\\
		\par Next we show (2). By a familiar result on local fields, we can find a finite Galois extension $F/K_v$ such that any finite extension of $K_v$ of degree at most $\delta$ admits an embedding into $F$. Therefore, $x$ lifts to a point $x'\in X(F)$. The space $X(F)$, endowed with its analytic topology, is compact. By continuity of the metric, the map $x'\mapsto \log \|1\|_{\shO(\overline{E})|_{C_{x'}},v,\sup}$ is lower semicontinuous and is therefore bounded below.\\
		\par For the upper bound, if $v$ is non-archimedean, take a normal model $(\mathcal{Z},\shE)$ over $O_K$ of $(X\times C, E)$ such that $\|\cdot\|_v\le \mu\|\cdot\|_{\mathrm{mod},v}$ for some $\mu>0$. Passing to a multiple, we may assume that $\shE$ is an integral divisor. Its horizontal components must have positive multiplicity. Take $B\ge 0$ such that the multiplicity of any vertical component of $-\shE$ is not greater than $B$. Then, $\|1\|_{\mathrm{mod},v}\le (\# k(v))^B$, so the map $x'\mapsto \log \|1\|_{\shO(\overline{E})|_{C_{x'}},v,\sup}$ is bounded above. If $v$ is archimedean, then the uniform upper bound follows from the fact that $(X\times C)(\C)$ is compact in the analytic topology. This completes the proof.
	\end{proof}
	
	\begin{corollary}\label{cor:sup-norm-bound}
		Let $X$ be a projective variety over $K$, $C$ be a curve over $K$, $E_1, E_2$ be two effective relative divisors on $X\times C$ over $X$, and $\overline{E}_i$ be adelic extensions of $E_i$. Then for any $\delta>0$, there exist a constant $c>0$ and a finite set of places $S\subset M_K$, containing all archimedean places, such that for any $x\in X(\overline{K})$ of degree $\deg x\le\delta$ and $m\in \mathbb{Z}_{>0}$,
		\begin{enumerate}
			\item For any $v\in M_{k(x)}$ lying over $M_K\setminus S$, $\|1\|_{\shO(m\overline{E}_1+\overline{E}_2)|_{C_x},v,\sup}=1$.
			\item For any $v\in M_{k(x)}$ lying over $S$, $\left|\log \|1\|_{\shO(m\overline{E}_1+\overline{E}_2)|_{C_x},v,\sup}\right|<cm$.
		\end{enumerate}
	\end{corollary}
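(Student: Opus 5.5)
Apply Lemma~\ref{lem:sup-norm-bound} separately to $\overline{E}_1$ and $\overline{E}_2$ (with the same $\delta$). This gives constants $c_1,c_2>0$ and finite sets $S_1,S_2$. Put $S=S_1\cup S_2$ and $c=c_1+c_2$. The key observation is that the section $1$ of $\shO(m\overline{E}_1+\overline{E}_2)|_{C_x}$ is the tensor product $1^{\otimes m}\otimes 1$. Hence pointwise on $C_x(\C_v)$,
$$\log|1|_{m\overline{E}_1+\overline{E}_2,v}=m\log|1|_{\overline{E}_1,v}+\log|1|_{\overline{E}_2,v}.$$

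For the upper bounds I use effectiveness. Each pointwise norm $|1|_{\overline{E}_i,v}$ is bounded by its sup norm. Taking sup over $y$ of the sum therefore gives
$$\log\|1\|_{m\overline{E}_1+\overline{E}_2,v,\sup}\le m\log\|1\|_{\overline{E}_1,v,\sup}+\log\|1\|_{\overline{E}_2,v,\sup}.$$
Outside $S$ this is $\le 0$ by part (1) of the lemma. On $S$ it is $< mc_1+c_2\le cm$, using $m\ge1$.

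The lower bounds are the main obstacle, because the sup of a sum is not the sum of the sups. I handle them as follows.

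\emph{Outside $S$.} Reuse the argument inside the proof of part (1) rather than only its statement. Choose $A=\deg E_1+\deg E_2+1$ points $y_i$ from the start, and enlarge $S$ so that the reductions of the $y_i$ stay disjoint and both $\shE_1,\shE_2$ have the correct fiber degrees. Then at each relevant $\idp$ some $y_i$ avoids both $\shE_{1,x}$ and $\shE_{2,x}$. At that point both model norms equal $1$, so the sup norm is $\ge1$. Combined with the upper bound, it equals $1$.

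\emph{On $S$.} Evaluate at the same kind of point when possible. In general, $E_1+E_2$ is itself an effective relative divisor, and $\log|1|_{\overline{E}_1}\le \log|1|_{\overline{E}_1+\overline{E}_2}-\log|1|_{\overline{E}_2}$ with $\log|1|_{\overline{E}_2}\le \log\|1\|_{\overline{E}_2,\sup}<c_2$. The simplest route is to use $m\log|1|_{\overline{E}_1}+\log|1|_{\overline{E}_2}\ge m\bigl(\log|1|_{\overline{E}_1}+\log|1|_{\overline{E}_2}\bigr)-(m-1)\log|1|_{\overline{E}_2}$. Since $\log|1|_{\overline{E}_2}<c_2$, this is
$$\ge m\log|1|_{\overline{E}_1+\overline{E}_2,v}-(m-1)c_2 .$$
Taking sup over $y$, and applying part (2) of Lemma~\ref{lem:sup-norm-bound} to the effective relative divisor $E_1+E_2$ with extension $\overline{E}_1+\overline{E}_2$, gives a lower bound $>-m c_3-mc_2$.

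Enlarging $S$ and $c$ to absorb the constants from this third application proves (2) with $c=c_1+c_2+c_3$.
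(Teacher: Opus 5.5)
Your proof is correct and is essentially the paper's argument. The paper also applies Lemma~\ref{lem:sup-norm-bound} to $E_1$, $E_2$ and $E_1+E_2$, gets the upper bound from subadditivity of $\log\sup$, and gets the lower bound from $m\log\|1\|_{\overline{E}_1+\overline{E}_2,\sup}\le\log\|1\|_{m\overline{E}_1+\overline{E}_2,\sup}+(m-1)\log\|1\|_{\overline{E}_2,\sup}$, which is your pointwise identity after taking suprema. The only difference is outside $S$: you re-run the auxiliary-point argument from the lemma's proof, whereas this same $(m-1)$-inequality, combined with part (1) of the lemma for $E_2$ and $E_1+E_2$, already gives the lower bound $\ge 0$ there without using the proof's internals.
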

	
	\begin{proof}
		Combine Lemma~\ref{lem:sup-norm-bound} applied to $E_1, E_2, (E_1+E_2)$ with the inequalities
		$$\log \|1\|_{\shO(m\overline{E}_1+\overline{E}_2)|_{C_x},v,\sup}\le m\log \|1\|_{\shO(\overline{E}_1)|_{C_x},v,\sup}+\log \|1\|_{\shO(\overline{E}_2)|_{C_x},v,\sup};$$
		$$\log \|1\|_{\shO(m(\overline{E}_1+\overline{E}_2))|_{C_x},v,\sup}\le \log \|1\|_{\shO(m\overline{E}_1+\overline{E}_2)|_{C_x},v,\sup}+(m-1)\log \|1\|_{\shO(\overline{E}_2)|_{C_x},v,\sup}.$$
		We obtain the result.
	\end{proof}
	
	\paragraph{} For completeness, we include the following basic fact.
	
	\begin{lemma}\label{lem:trivial-underlying-divisor}
		Let $X$ be a projective variety over $K$, $\overline{E}$ be an adelic divisor on $X$ with underlying divisor $E=0$. Then,
		\begin{enumerate}
			\item $0<\inf_{Q\in X(\C_v)}\|1(Q)\|_{\shO(\overline{E}),v}\le\sup_{Q\in X(\C_v)}\|1(Q)\|_{\shO(\overline{E}),v}<\infty$ for every $v\in M_K$.
			\item For all but finitely many places, both the supremum and infimum above are $1$.
		\end{enumerate}
	\end{lemma}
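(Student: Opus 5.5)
The plan is to unwind the definition of an adelic divisor with trivial underlying divisor and treat the two assertions separately: first the local statement at a fixed place, then the uniform statement at almost all places.

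\textbf{Part (1).} Since $E=0$, the section $1$ is a global nowhere-vanishing section of $\shO(E)=\shO_X$. Hence $g_v:=-\log\|1\|_{\shO(\overline{E}),v}$ is a real-valued function on $X^{\mathrm{an}}_v$, or on $X(\C_v)$. By the definition of adelic metrics, which follows [\hyperlink{YZ26}{YZ26}], $\|\cdot\|_v$ is a continuous metric, so $g_v$ is continuous on the Berkovich analytification $X_v^{\mathrm{an}}$. This space is compact because $X$ is projective. So $g_v$ is bounded above and below on $X_v^{\mathrm{an}}$.

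Every point $Q\in X(\C_v)$ gives a point of $X_v^{\mathrm{an}}$. The value $\|1(Q)\|$ is the value of the metric at that image point. It follows that the infimum and supremum over $X(\C_v)$ lie between the extreme values of $e^{-g_v}$ on the compact space. These extreme values are positive and finite, which proves (1).

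For archimedean $v$ the argument is the same, using compactness of $X(\C)$ and continuity of the metric; no Berkovich spaces are needed.

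\textbf{Part (2).} By the coherence condition there is a projective integral model $\mathcal{X}$ of $X$ over $O_K$ together with a model $\mathcal{E}$ such that $\overline{E}$ is induced by $(\mathcal{X},\mathcal{E})$ at all places outside a finite set $S$. Here $\mathcal{E}$ is a Cartier divisor (or, after passing to a multiple, an integral one) whose generic fibre is $E=0$. So $\mathcal{E}$ is a combination of vertical prime divisors. Only finitely many fibres contain components of $\mathcal{E}$, so after enlarging $S$ we may assume $\mathcal{E}$ is trivial over $O_K$ localized away from $S$.

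For $v\notin S$, the model metric of $\shO_{\mathcal{X}}$ with section $1$ is then identically $1$. Indeed, each point $Q$ extends to an $O_{\C_v}$-point of $\mathcal{X}$ by the valuative criterion, and $1$ is a generator of the trivial bundle there. So both the supremum and the infimum equal $1$.

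\textbf{Main obstacle.} The only subtle point is the definitional one. In [\hyperlink{YZ26}{YZ26}], adelic divisors are limits of model divisors in the boundary topology rather than literally model divisors. If the paper's convention is this limit version, I would argue as follows. A Cauchy sequence of models $\mathcal{E}_j$ satisfies $-\eps_j\mathcal{D}_0\le\mathcal{E}_j-\mathcal{E}\le\eps_j\mathcal{D}_0$ for a fixed boundary divisor $\mathcal{D}_0$ which is effective and supported over finitely many places. Outside the support of $\mathcal{D}_0$ and the finitely many bad fibres of one model $\mathcal{E}_1$, all the Green functions coincide and are identically $0$. This again gives (2), and also gives boundedness at the remaining places, since the functions there are uniform limits of bounded continuous functions.
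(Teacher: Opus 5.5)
Your proof is correct. For part (2) it is the paper's argument: the coherence condition, with the vertical model divisor trivial away from finitely many fibres, which you spell out in more detail. For part (1) at non-archimedean places, however, you take a different route.

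The paper first reduces everything to upper bounds via $\inf_Q\|1(Q)\|_{\shO(\overline{E}),v}=(\sup_Q\|1(Q)\|_{\shO(-\overline{E}),v})^{-1}$. It then reuses the upper-bound step of Lemma~\ref{lem:sup-norm-bound}:
\begin{itemize}
\item it dominates the metric by $\mu\|\cdot\|_{\mathrm{mod},v}$ for a normal model $(\mathcal{Z},\mathcal{E})$;
\item it bounds the model norm of $1$ by $(\#k(v))^B$, where $B$ bounds the multiplicities of the vertical components of $-\mathcal{E}$.
\end{itemize}
Compactness is invoked only at archimedean places.

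You instead use that an adelic metric extends continuously to the compact Berkovich analytification $X_v^{\mathrm{an}}$. This gives positivity and finiteness of the infimum and the supremum simultaneously. It needs no $-\overline{E}$ trick and treats archimedean and non-archimedean places identically.

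The price is that your argument genuinely depends on this Berkovich continuity. Continuity on the non-compact space $X(\C_v)$ alone would not give boundedness. As you note, the continuity does hold in the Yuan--Zhang framework. The paper's argument needs only comparability with a model metric, and it yields an explicit bound in terms of multiplicities, the same template as the upper bound in Lemma~\ref{lem:sup-norm-bound}.

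A small point: on a possibly non-normal model, replace ``$\mathcal{E}$ is a combination of vertical prime divisors'' with the following. The support of $\mathcal{E}$ is closed and misses the generic fibre, so by properness it lies over finitely many primes. This is exactly what you need to enlarge $S$.
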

	
	\begin{proof}
		It suffices to prove the inequalities involving the supremum, since
		$$\inf_{Q\in X(\C_v)}\|1(Q)\|_{\shO(\overline{E}),v}=(\sup_{Q\in X(\C_v)}\|1(Q)\|_{\shO(-\overline{E}),v})^{-1}.$$
		By the coherence condition, for all but finitely many places, the norm of the section $1$ is $1$ everywhere. For the remaining places, use the same argument as the upper-bound part in Lemma~\ref{lem:sup-norm-bound}.
	\end{proof}
	
	\section{Boundary effect and independence of the metric}\label{sec:hd-reduction}
	
	\paragraph{} We first prove that the contribution to $D(\overline{L})$ comes from discrete transforms near the boundary, in the sense of the following limit:
	
	\begin{lemma}\label{lem:hd-top-discrete-transforms}
		We have
		
		\begin{equation}\notag
			\begin{split}
				D(\overline{L}) &\ =\lim_{\eps\to 0^+}\lim_{m\to\infty}\frac{1}{m^{n+1}}\sum_{\al\in\nu(mL),\mathrm{dist}\left(\al/m,\partial(\Delta(L))\right)<\eps}\left(-F[m\overline{L}](\al)\right)\\
				&\ =\lim_{\eps\to 0^+}\lim_{m\to\infty}\frac{1}{m^{n+1}}\sum_{\al\in\nu(mL),\mathrm{dist}\left(\al/m,\partial(\Delta(L))\right)<\eps}\max\{-F[m\overline{L}](\al),0\}.
			\end{split}
		\end{equation}
	\end{lemma}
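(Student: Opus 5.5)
The plan is to write both $\vol(\overline{L})$ and $\int_{\Delta(L)}c[\overline{L}]$ as limits of normalized sums of the discrete transforms $F[m\overline{L}](\al)$ over $\al\in\nu(mL)$. The sums then cancel in the interior of $\Delta(L)$ and survive only near $\partial\Delta(L)$.

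\emph{Step 1: $\hat{\chi}$ as a sum of discrete transforms.} Since $\nu_t$ takes distinct values on a suitable basis, $\#\nu(mL)=h^0(mL)$. The lexicographic filtration of $H^0(X,mL)$ by $\{s:\nu_t(s)\ge\al\}$ has $1$-dimensional graded pieces. By the Boucksom--Chen description recalled above, $F[m\overline{L}](\al)$ is the adelic degree of the piece indexed by $\al$, with the induced subquotient norms. I would compare $\hat{\chi}(X,m\overline{L})$ with the adelic degree of $(H^0(X,mL),\|\cdot\|_{\sup})$, which is additive along filtrations for exactly induced norms. The discrepancy between sup-norm unit balls and the determinant norm is $O(r\log r)$ with $r=h^0(mL)=O(m^n)$: at archimedean places this follows from John's ellipsoid/Hadamard, and at nonarchimedean places it is trivial for all but finitely many. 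This should give
\[
\hat{\chi}(X,m\overline{L})=\sum_{\al\in\nu(mL)}F[m\overline{L}](\al)+o(m^{n+1}).
\]
Hence $\frac{1}{(n+1)!}\vol(\overline{L})=\limsup_m m^{-(n+1)}\sum_{\al}F[m\overline{L}](\al)$.

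\emph{Step 2: the interior.} Fix $\eps>0$ and the compact set $K_\eps=\{\beta\in\Delta(L):\mathrm{dist}(\beta,\partial\Delta(L))\ge\eps\}\subset\Delta(L)^\circ$. Proposition~\ref{prop:chebyshev-continuity} holds for arbitrary sequences $\al_k\to\al$, so a standard diagonal argument upgrades it to uniform convergence of $\frac1m F[m\overline{L}](\al)$ to $c[\overline{L}](\al/m)$ on $K_\eps$. Combined with the equidistribution $\#\{\al\in\nu(mL):\al/m\in U\}=m^n\mathrm{vol}(U)+o(m^n)$ of the Okounkov semigroup (Lazarsfeld--Musta\c{t}\u{a}/Kaveh--Khovanskii, applied to $K_\eps$ with null boundary), this gives
\[
\lim_m m^{-(n+1)}\sum_{\al/m\in K_\eps}F[m\overline{L}](\al)=\int_{K_\eps}c[\overline{L}].
\]
Subtracting from Step 1, the quantity $D(\overline{L})-\big(\int_{\Delta\setminus K_\eps}c-\lim_m m^{-(n+1)}\sum_{\mathrm{dist}<\eps}F\big)$ vanishes. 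This is also where I would check that the limit over $m$ of the boundary sum exists, so that the $\limsup$ in $\vol$ is a genuine limit in the regime we need.

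\emph{Step 3: the strip.} Two inputs control the strip $\{\mathrm{dist}<\eps\}$.
\begin{itemize}
\item There is a uniform upper bound $F[m\overline{L}](\al)\le Cm$, from Yuan's Lemma~2.4 (a sup-norm lower bound via the leading coefficient).
\item The function $c[\overline{L}]$ is concave and bounded above, and Yuan's inequality makes $\int_{\Delta(L)}c>-\infty$, so $c$ is integrable.
\end{itemize}
Integrability gives $\int_{\Delta\setminus K_\eps}c\to0$ as $\eps\to0$, which yields the first equality. For the second equality, the upper bound gives
\[
0\le\sum_{\mathrm{dist}<\eps}\max\{F,0\}\le Cm\cdot\#\{\al:\mathrm{dist}(\al/m,\partial\Delta)<\eps\}\le C'\eps m^{n+1},
\]
so replacing $-F$ by $\max\{-F,0\}$ changes the expression by $O(\eps)$, which vanishes in the limit.

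I expect the main obstacle to be Step 1: controlling $\hat{\chi}$ against the filtration sum with only $o(m^{n+1})$ error, uniformly over the finitely many bad places, and at the places where the sup norm is not ultrametric-orthogonalizable. The rest reduces to Yuan's results and the standard Okounkov-body counting.
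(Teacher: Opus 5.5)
Your argument is correct in substance, but the interior step takes a different route from the paper.

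\emph{The interior step.} The paper takes $\frac{1}{(n+1)!}\vol(\overline{L})=\lim_m m^{-(n+1)}\sum_{\al\in\nu(mL)}F[m\overline{L}](\al)$ from [Yua09, \S2.4]. It then proves the interior convergence on $\Delta_\eps(L)$ by dominated convergence. The step functions converge almost everywhere. The upper bound is $\frac1mF[m\overline{L}](\al)\le c[\overline{L}](\al/m)$, from superadditivity. The lower bound is built by hand from Khovanskii's theorem and the decomposition $m=\sum m_i$ with $m_i\in[N_0,2N_0)$ and $\al=\sum\al^{(i)}$, $\al^{(i)}\in\nu(m_iL)$.

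You instead note that Proposition~\ref{prop:chebyshev-continuity} holds along arbitrary sequences. Together with continuity of $c[\overline{L}]$ on the compact set $K_\eps\subset\Delta(L)^\circ$, it already gives
$$\sup_{\al\in\nu(mL),\,\al/m\in K_\eps}\Bigl|\frac1mF[m\overline{L}](\al)-c[\overline{L}](\al/m)\Bigr|\to0.$$
This is a subsequence/compactness argument rather than a diagonal one. You then finish with equidistribution. That is valid and shorter, and it gives uniform rather than almost-everywhere convergence.

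What the paper's explicit lower bound buys is flexibility. It does not need the region to be compact in $\Delta(L)^\circ$. That is exactly what permits the one-face variant in Remark~\ref{rmk:one-face}. There the region $b\Delta_{n-1}\times[0,d-\eps]$ meets the other faces, and Proposition~\ref{prop:chebyshev-continuity} says nothing at its boundary points. Your Step 3 coincides with the paper's.

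\emph{Step 1 and the inner limit.} Re-deriving $\hat{\chi}(X,m\overline{L})=\sum_{\al}F[m\overline{L}](\al)+O(m^n\log m)$ is fine; this is what [Yua09, \S2.4] and the use of Gromov's inequality in Proposition~\ref{prop:filtration} provide. But it only yields a $\limsup$, since $\vol$ is defined by one.

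You cannot ``check'' existence of the inner $\lim_m$ from your Steps 2--3. Once the interior sums converge, the strip sum has a limit if and only if $m^{-(n+1)}\hat{\chi}(X,m\overline{L})$ does. The strip is precisely where you have no lower bound on $F[m\overline{L}]$. So this fact has to be imported; the paper cites [Yua09, \S2.4] for it.

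Without it, your argument proves the lemma with $\liminf_m$ in place of $\lim_m$, for both expressions. This weaker form is enough for how the lemma is used later in the paper, which only needs lower bounds on strip sums or termwise comparisons between them.
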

	
	\begin{proof}
		Denote by
		$$\Delta_\eps(L)=\{\al\in\Delta(L):\mathrm{dist}(\al,\partial\Delta(L))\ge\eps\}.$$
		It suffices to prove the first equality since the second would then follow from the uniform upper bound [\hyperlink{Yua09}{Yua09}, Lem. 2.4] and a boundary count
		$$\frac{\#\{\alpha\in\nu(mL):\mathrm{dist}(\alpha/m,\partial\Delta)<\eps\}}{m^n}\le\mathrm{vol}(\Delta(L)\setminus\Delta_\eps(L))+o(1).$$
		By \S2.4 of [\hyperlink{Yua09}{Yua09}], we have
		$$\frac{1}{(n+1)!}\vol(\overline{L})=\lim_{m\to\infty}\frac{1}{m^{n+1}}\sum_{\al\in\nu(mL)}F[m\overline{L}](\al).$$
		Therefore, it suffices to prove the equalities
		$$\lim_{\eps\to 0^+}\int_{\al\in\Delta(L)\setminus\Delta_\eps(L)}c[\overline{L}](\al)\d\al=0;$$
		$$\lim_{m\to\infty}\frac{1}{m^n}\sum_{\al\in\nu(mL),\al/m\in\Delta_\eps(L)}\frac{1}{m}F[m\overline{L}](\al)=\int_{\al\in\Delta_\eps(L)}c[\overline{L}](\al )\d\al.$$
		The first holds since $c[\overline{L}]$ is integrable. To show the second equality, apply the dominated convergence theorem to the sequence of step functions 
		$$\sum_{\al\in\nu(mL),\al/m\in\Delta_\eps(L)}\frac{1}{m}F[m\overline{L}](\al)1_{\prod_{k=1}^n[\frac{\al_k-1}{m},\frac{\al_k}{m})}$$
		which converges almost everywhere to $1_{\Delta_\eps(L)}c[\overline{L}]$. We apply a reformulation of Khovanskii's result [\hyperlink{Yua09}{Yua09}, Cor. 2.2]. It says that for every convex body $D\subset \Delta(L)^\circ$ and all sufficiently large $m$, we have
		$$D\cap\frac{1}{m}\nu(mL)=D\cap\frac{1}{m}\mathbb{Z}^n.$$
		Now take a sufficiently large integer $N_0>\frac{2\sqrt{n}}{\eps}$. Then, for any $N\ge N_0$, we have
		$$N\Delta_{\eps/2}(L)\cap\mathbb{Z}^n\subset\nu(NL).$$
		Any $m\ge N_0$ can be expressed as a sum of integers $m_1,\cdots,m_j\in[N_0,2N_0)$, and any vector $\al\in m\Delta_\eps(L)\cap\mathbb{Z}^n$ can be expressed as a sum of vectors
		$$\al^{(i)}\in \left(\frac{m_i}{m}\al+[-1,1]^n\right)\cap\mathbb{Z}^n\subset m_i\Delta_{\eps/2}(L)\cap\mathbb{Z}^n\subset\nu(m_iL)$$
		where $1\le i\le j$. This can be achieved by choosing $\al^{(i)}$ inductively such that $$\left|\sum_{l=1}^i\al^{(l)}_k-\frac{\sum_{l=1}^im_l}{m}\al_k\right|\le\frac{1}{2}.$$
		Therefore,
		$$\frac{1}{m}F[m\overline{L}](\al)\ge\frac{1}{\sum_{i=1}^j m_i}\sum_{i=1}^j F[m_i\overline{L}](\al^{(i)})\ge\min_{N_0\le N< 2N_0,\al'\in\nu(NL)}\{\frac{1}{N}F[N\overline{L}](\al')\}.$$
		These quantities are also bounded above by $\sup_{\Delta_{\eps/2}(L)}c[\overline{L}]$, since
		$$c[\overline{L}](\frac{\al}{m})=\lim_{l\to\infty}\frac{1}{ml}F[lm\overline{L}](l\al)\ge\lim_{l\to\infty}\frac{1}{ml}\cdot lF[m\overline{L}](\al)=\frac{1}{m}F[m\overline{L}](\al).$$
		Thus the dominated convergence theorem applies.
	\end{proof}
	
	\begin{remark}\label{rmk:one-face}
		We present a variant of the preceding boundary effect, which involves only one boundary face.\\
		\par Assume $X=C\times\P_K^{n-1}\ (n\ge 1)$ and $x_0=(y_0,[1:0:\cdots:0])$. Set $t_i=\frac{X_i}{X_0}$ for $1\le i\le n-1$, and let $t_n$ be a parameter of $C$ at $y_0$. In this case, we have $L=M\boxtimes \shO(b)$, where $b\in\mathbb{Z}_{>0}$ and $M\in\Pic(C)$ has degree $d>0$. By the K{\"u}nneth formula,
		$$H^0(X,mL)=H^0(C,mM)\otimes H^0(\P_K^{n-1},\shO(bm)).$$
		Therefore, we have $$\nu(mL)=\nu_{(t_1,\cdots,t_{n-1})}(\shO(bm))\times\nu_{t_n}(mM),$$
		and the Okounkov body is of the form
		$$\Delta(L)=b\Delta_{n-1}\times [0,d].$$
		Note that $\nu_{(t_1,\cdots,t_{n-1})}(\shO(bm))=bm\Delta_{n-1}\cap\mathbb{Z}^{n-1}$ and $\nu_{t_n}(mM)\supset\{0,1,\cdots,md-2g\}$. Therefore, for any $\eps>0$ and all sufficiently large $m$, we may replace Khovanskii's result with the stronger inclusion
		$$(b\Delta_{n-1}\times [0,d-\eps])\cap\frac{1}{m}\nu(mL)=(b\Delta_{n-1}\times [0,d-\eps])\cap\frac{1}{m}\mathbb{Z}^n,$$
		and deduce the required uniform lower bound. The uniform upper bound follows from the concavity of $c[\overline{L}]$. Hence we obtain the boundary effect
		\begin{equation}\notag
			\begin{split}
				D(\overline{L}) &\ =\lim_{\eps\to 0^+}\lim_{m\to\infty}\frac{1}{m^{n+1}}\sum_{\al\in\nu(mL),\al_n/m>d-\eps}\left(-F[m\overline{L}](\al)\right)\\
				&\ =\lim_{\eps\to 0^+}\lim_{m\to\infty}\frac{1}{m^{n+1}}\sum_{\al\in\nu(mL),\al_n/m>d-\eps}\max\{-F[m\overline{L}](\al),0\}.
			\end{split}
		\end{equation}
	\end{remark}
	
	\paragraph{} We now deduce the independence of $D(\overline{L})$ from the choice of adelic metric:
	
	\begin{theorem}\label{thm:hd-metric-independence}
		If $\overline{L}'$ is another adelic line bundle on $X$ with the same underlying line bundle $L$, then $D(\overline{L}')=D(\overline{L})$.
	\end{theorem}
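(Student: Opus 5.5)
We will use the boundary formula of Lemma~\ref{lem:hd-top-discrete-transforms} for both $\overline{L}$ and $\overline{L}'$ and compare the two discrete transforms pointwise. The first step is to write $\overline{L}'=\overline{L}+\overline{E}$, where $\overline{E}$ is an adelic divisor with underlying divisor $E=0$.

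By Lemma~\ref{lem:trivial-underlying-divisor}, the function $\phi_v=-\log\|1\|_{\shO(\overline{E}),v}$ is bounded on $X(\C_v)$ for every $v\in M_K$. Moreover, it vanishes identically for $v$ outside a finite set $S$. Set
$$c_0=\sum_{v\in S}n_v\sup_{X(\C_v)}|\phi_v|<\infty .$$

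The second step is the pointwise comparison. The spaces $H^0(X,mL)$ and $H^0(X,mL')$ coincide, with $s\mapsto s\cdot 1^{m}$, and so do the filtrations by $\nu_t$; the local generator $s_0$ plays the same role for both. For every $v$ and every section $s$ we have
$$\big|\log\|s\|_{m\overline{L}',v,\sup}-\log\|s\|_{m\overline{L},v,\sup}\big|\le m\sup_{X(\C_v)}|\phi_v|,$$
and the left-hand side is $0$ for $v\notin S$. Taking the infimum over $H^0(X,mL)(\al)$ and summing with the weights $n_v$ gives
$$\big|F[m\overline{L}'](\al)-F[m\overline{L}](\al)\big|\le c_0m \qquad\text{for all } \al\in\nu(mL).$$

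The third step inserts this into the first expression for $D$ in Lemma~\ref{lem:hd-top-discrete-transforms}. The two boundary sums, taken over the same index set, differ by at most
$$\frac{c_0m}{m^{n+1}}\,\#\{\al\in\nu(mL):\mathrm{dist}(\al/m,\partial\Delta(L))<\eps\}.$$
By the boundary count used in the proof of Lemma~\ref{lem:hd-top-discrete-transforms}, the cardinality here is at most $m^n\left(\mathrm{vol}(\Delta(L)\setminus\Delta_\eps(L))+o(1)\right)$. After letting $m\to\infty$, the difference is therefore at most $c_0\,\mathrm{vol}(\Delta(L)\setminus\Delta_\eps(L))$, which tends to $0$ as $\eps\to0^+$. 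Hence $D(\overline{L}')=D(\overline{L})$.

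The main obstacle is that Lemma~\ref{lem:hd-top-discrete-transforms} asserts the inner limit in $m$ exists. If that existence is in doubt, we will run the same estimate with $\limsup$ and $\liminf$ in $m$; the bound obtained is uniform and also holds for them. Everything else is the routine bookkeeping of essentially-finite sums.
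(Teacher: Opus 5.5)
Your proposal is correct and is the same argument as the paper's: the paper observes via Lemma~\ref{lem:trivial-underlying-divisor} that $\frac{1}{m}\bigl(F[m\overline{L}](\al)-F[m\overline{L}'](\al)\bigr)$ is uniformly bounded, and then invokes the boundary formula of Lemma~\ref{lem:hd-top-discrete-transforms}. You spell out the two details the paper leaves implicit, namely the place-by-place sup-norm comparison and the boundary count that makes the $O(m)$ discrepancy negligible as $\eps\to0^+$; your $\limsup$/$\liminf$ fallback is harmless but unnecessary, since the proof of that lemma already shows the inner limit exists.
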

	
	\begin{proof}
		By Lemma~\ref{lem:trivial-underlying-divisor}, $\frac{1}{m}(F[m\overline{L}](\al)-F[m\overline{L}'](\al))$ is uniformly bounded, so applying Lemma~\ref{lem:hd-top-discrete-transforms} gives the proof.
	\end{proof}
	
	\section{The first inequality}\label{sec:first-inequality}
	
	\paragraph{} The following two sections are devoted to the proof of Theorem~\ref{thm:main}. In this section, we prove the first inequality $D(\overline{L})\ge[K:\mathbb{Q}]\hat{h}(L-dx_0)$ by computing the discrete transform at the $(g+1)$ largest values of $\nu(mL)$ in \S\ref{sec:highest-transforms} and \S\ref{sec:picard-determinant}, and applying the boundary effect in the previous section. For convenience (e.g. to apply Gromov's inequality), we assume from now on that the metrics of $L$ at archimedean places are smooth.
	
	\subsection{Sum of highest discrete transforms}\label{sec:highest-transforms}
	
	\paragraph{} We shall estimate the sum of discrete transforms at the $(g+1)$ largest values in $\nu(mL)$, which is associated with the degree of an adelically normed vector space. Set $e_0=e=2g$. Then, for $md>e$, Riemann–Roch gives
	$$\nu(mL)=\{0,1,\cdots,md-e,md-e_1,\cdots,md-e_g\},$$
	where $0\le e_g< \cdots< e_1<e_0=e$.
	
	\paragraph{} Let
	$$W_j=\{s\in H^0(C,mL):s=0\ \mathrm{or}\ \nu(s)\ge md-e_j\}$$
	for $0\le j\le g$ and $W_{g+1}=0$. Denote by $\overline{W}_j$ the $M_K$-normed vector space endowed with the $\sup$ norm at non-archimedean places and $L^2$ norm at archimedean places on $W_j$.
	
	\begin{proposition}\label{prop:filtration}
		\[
		\overline{W}_0\supset\overline{W}_1\supset\cdots\supset\overline{W}_{g+1}=\{0\}
		\]
		is a filtration with $1$-dimensional successive quotients, and for $0\le j\le g$,
		\[
		F[m\overline{L}](md-e_j)=\widehat{\mathrm{deg}}(\overline{W}_j/\overline{W}_{j+1})+O(\log m).
		\]
		As a consequence,
		\[
		\sum_{j=0}^{g}F[m\overline{L}](md-e_j)=\widehat{\mathrm{deg}}(\det \overline{W}_0)+O(\log m).
		\]
	\end{proposition}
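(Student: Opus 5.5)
The plan has three steps: identify the filtration, compare the quotient degrees with the discrete transforms, and then sum.

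\emph{The filtration.} Since $\nu$ is a valuation on $H^0(C,mL)\setminus\{0\}$, each $W_j$ is a $K$-subspace. By the description of $\nu(mL)$ from Riemann--Roch, the values of $\nu$ on $W_j\setminus\{0\}$ are exactly $\{md-e_j,\dots,md-e_g\}$. Because $\dim H^0(C,mL)(\ge\al)/H^0(C,mL)(>\al)\le 1$ for every $\al$ (two sections with the same leading exponent differ by a scalar up to higher order), $\dim W_j=g+1-j$. Hence the successive quotients are $1$-dimensional, and $W_j/W_{j+1}$ is exactly the Boucksom--Chen space attached to $\al=md-e_j$: indeed $W_{j+1}=\{s:\nu(s)>md-e_j\}\cup\{0\}$, since no value of $\nu(mL)$ lies strictly between $md-e_j$ and $md-e_{j+1}$.

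\emph{Comparison with $F$.} By the Boucksom--Chen description, $F[m\overline{L}](md-e_j)$ is the adelic degree of $W_j/W_{j+1}$ equipped with the quotient norms induced from the sup norms on $H^0(C,mL)$ at every place. At a nonarchimedean place, the norm we put on $\overline{W}_j$ is the restriction of the sup norm, and the quotient norm on $W_j/W_{j+1}$ of the leading-coefficient-one class is $\inf_{s\in H^0(mL)(\al)}\|s\|_{v,\sup}$. This is precisely $e^{-F_v}$, so the two sides agree exactly at these places. At archimedean places the only discrepancy is between the sup norm and the $L^2$ norm on the finite-dimensional space $H^0(C,mL)$. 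I would invoke Gromov's inequality (this is where smoothness of the archimedean metrics is used): $\|s\|_{L^2}\le \|s\|_{\sup}\le C m^{a}\|s\|_{L^2}$ for a constant $C$ and exponent $a$ independent of $m$. Restricted norms and quotient norms inherit the same comparison, so the quotient-norm degrees differ by at most $\sum_{v\mid\infty} n_v(\log C+a\log m)=O(\log m)$.

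\emph{Summation.} Additivity of the adelic degree along a filtration holds when subspaces carry restricted norms and graded pieces carry quotient norms; this is an exact identity $\widehat{\deg}(\det\overline{W}_0)=\sum_j\widehat{\deg}(\overline{W}_j/\overline{W}_{j+1})$, valid for sup norms in the ultrametric sense and for $L^2$ norms hermitianly. Summing the $g+1$ estimates gives the consequence. The error stays $O(\log m)$ because $g+1$ is fixed.

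The main obstacle is the normalization in the nonarchimedean comparison. At nonarchimedean places the sup norm is ultrametric but need not be diagonalizable over $K_v$ when the value group is dense, so the determinant degree must be defined so that additivity holds up to a bounded error. I would handle this using the ultrametric Hadamard inequality, which gives $\|s_1\wedge\cdots\wedge s_r\|\le\prod\|s_i\|$, with equality up to a factor bounded in terms of the dimension only. Since the dimension is fixed at $g+1$, this error is $O(1)$.
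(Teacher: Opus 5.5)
Your proposal is correct and follows essentially the same route as the paper. You identify $W_j/W_{j+1}$ with the Boucksom--Chen graded piece at $\alpha=md-e_j$, note exact agreement at finite places, absorb the archimedean sup-versus-$L^2$ discrepancy via Gromov's inequality, and sum using additivity of $\widehat{\mathrm{deg}}$ along the filtration.

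One caveat concerns your last paragraph. A per-place error ``bounded in terms of the dimension only'' would not sum to $O(1)$ over the infinitely many finite places. No such error actually occurs, however. Each $K_v$ is discretely valued, hence spherically complete, so the sup norm on $W_0\otimes K_v$ admits orthogonal bases adapted to the filtration, and the determinant norm is exactly multiplicative at every finite place.
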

	
	\begin{proof}
		The result follows from the alternative characterization by Boucksom-Chen and Gromov's inequality [\hyperlink{GS92}{GS92}, Lem. 30], which implies that the logarithms of $\sup$ and $L^2$ norms on $W_0$ at the archimedean places differ by $O(\log m)$. See also [\hyperlink{Yua09}{Yua09}, Prop. 2.9].
	\end{proof}
	
	\paragraph{} Now we endow $W_0$ with another norm. Denote by $J$ the Jacobian variety of $C$, with $\pi_C:J\times C\to C$ the projection. Let $\shP_C\in\Pic(J\times C)$ be the Poincar{\' e} line bundle rigidified along $J\times x_0$. It admits a natural adelic extension by a Tate limiting argument (see [\hyperlink{YZ26}{YZ26}, Thm. 6.1.1]). Denote this extension by $\overline{\shP}_C$. Let $L_0=L-dx_0$ correspond to $Q\in J(K)$. Choose any adelic extension $\overline{x}_0$ of the divisor $x_0$. Since the result is independent of the choice of adelic extension, we may assume that $\overline{L}_0=\overline{L}-d\shO(\overline{x}_0)=\overline{\shP}_C|_{Q\times C}$.
	
	\paragraph{} In this sense, $W_j$ can be identified with the space $H^0(C,mL_0+e_jx_0)$. Denote by $\overline{W}'_0$ the $M_K$-normed vector space endowed with the $\sup$ norm at non-archimedean places and $L^2$ norm at archimedean places with respect to $m\overline{L}_0+e\overline{x}_0$ on $W_0$.
	
	\begin{proposition}\label{prop:norm-comparison}
		There exists a constant $c>0$ and a finite set of places $S\subset M_K$, containing all archimedean places, such that for any $m\in \mathbb{Z}_{>0}$,
		\begin{enumerate}
			\item For any $v\in M_K\setminus S$, the $v$-norms on $\overline{W}_0$ and $\overline{W}'_0$ are identical.
			\item For any $v\in S$, the logarithms of $v$-norms on $\overline{W}_0$ and $\overline{W}_0'$ differ by at most $c(m+1)$.
		\end{enumerate}
	\end{proposition}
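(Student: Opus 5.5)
The two normed spaces have the same underlying vector space $W_0=H^0(C,mL)\cong H^0(C,mL_0+ex_0)$. I will compare them through the ratio of the two metrics on the same line bundle. On $C$ the line bundles $mL$ and $mL_0+ex_0$ are identified by multiplication by the canonical section of $(md-e)x_0$. Concretely, $mL=mL_0+md\,x_0$, and a section of $mL$ vanishing to order $\ge md-e$ at $x_0$ is $1_{(md-e)x_0}$ times a section of $mL_0+ex_0$. The metrics to compare are therefore $m\overline{L}$ on the one side, and $m\overline{L}_0+e\overline{x}_0+(md-e)\overline{x}_0$ on the other. Since $\overline{L}_0=\overline{L}-d\shO(\overline{x}_0)$, these agree exactly, as adelic line bundles with the section $1_{(md-e)x_0}$ inserted.

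Unwinding this, for $s\in W_0$ written as $s=1_{(md-e)x_0}\cdot s'$, at each place $v$ we have
\[
|s|_{m\overline{L},v}(y)=\|1_{(md-e)x_0}(y)\|_{\shO((md-e)\overline{x}_0),v}\cdot|s'|_{m\overline{L}_0+e\overline{x}_0,v}(y).
\]
The comparison therefore reduces to bounding the pointwise norm of the single function $\|1(y)\|_{\shO(\overline{x}_0),v}^{md-e}$, and not only its supremum. This is a problem, because this function vanishes at $x_0$, so it is not bounded below and a naive comparison fails. The key point is that the sections in $W_0$ are divisible by $1_{(md-e)x_0}$, and the norm $\overline{W}'_0$ is computed from $s'$. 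So I need two inequalities:
\[
\|s\|_{v}\le \|1\|^{md-e}_{\shO(\overline{x}_0),v,\sup}\|s'\|'_v
\]
and the reverse bound $\|s'\|'_v\le e^{c(m+1)}\|s\|_v$.

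The first inequality is immediate from Lemma~\ref{lem:sup-norm-bound}, applied with $X$ a point and $E=x_0$. That lemma gives $\|1\|_{\sup}=1$ outside a finite set $S$ and $|\log\|1\|_{\sup}|<c$ on $S$; for $L^2$ norms at archimedean places the same pointwise bound integrates. The reverse inequality is a maximum principle. At a nonarchimedean place $v\notin S$ where the model metric of $\overline{x}_0$ comes from the closure of $x_0$ in a smooth model, $s'$ attains its sup norm on the Berkovich curve at a point of the skeleton or generic fibre point where $\|1_{x_0}\|=1$. More concretely, I would compare through the reduction: the sup norm is the Gauss norm on the model, and the divisor $\overline{x}_0$ is horizontal, so $\|1\|_{\shO(\overline{x}_0)}$ equals $1$ away from the residue disc of $x_0$, while $s'$ also attains its max off that disc since reduction is a nonzero section on the special fibre which cannot be supported only at one point. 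This gives equality for $v\notin S$. For $v\in S$, I would remove a small neighbourhood $U$ of $x_0$ (an analytic disc, or a residue disc after finite extension) where $\|1_{x_0}\|$ is small. On $C\setminus U$, $-\log\|1_{x_0}\|$ is bounded by a constant $c_0$. By the maximum principle for the subharmonic function $\log|s'|$ on the disc $U$ (archimedean), or the maximum modulus principle on the closed disc (nonarchimedean), the sup of $|s'|$ over $C$ is attained or controlled on $C\setminus U$, up to a factor $e^{c_1 m}$ coming from the variation of the metric of $m\overline{L}_0+e\overline{x}_0$ against a trivialization on $U$. Combining gives $\log\|s'\|'_v\le\log\|s\|_v+c(m+1)$.

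The main obstacle is making the archimedean $L^2$ version of the reverse bound uniform in $m$. The plan is to pass to sup norms via Gromov's inequality, which costs only $O(\log m)$, absorbed into $c(m+1)$, and then apply the maximum principle argument above. The set $S$ consists of the archimedean places, the places of bad reduction of $C$, and the places where the models of $\overline{x}_0$ or $\overline{L}$ are not the standard ones.
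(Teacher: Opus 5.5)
Your proof is correct, but it takes a different route from the paper's.

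The paper does not argue on the single curve $C$. It writes the zero divisor of $s'\in H^0(C,mL_0+ex_0)$ as a point $R\in C^e(\overline{K})$ of degree at most $e!$ with $\phi(R)=mQ$, and pulls $\overline{\shP}_C$ back along $\phi\times\mathrm{id}$. This expresses both norms of $s$, up to a common scalar $B_{s,v}$, as sup norms on the fibre over $R$ of the constant section $1$ of $\overline{M}'+(md-e)p_C^*\overline{x}_0$ and of $\overline{M}'$. Here $\overline{M}'$ has underlying effective relative divisor $\sum_i p_{i,C}^*\Delta$. Corollary~\ref{cor:sup-norm-bound} then gives (1) and (2) at once. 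The nontrivial lower bound comes from two places in that corollary's proof:
\begin{itemize}
\item at good places, one picks among $\deg E+1$ auxiliary points one whose reduction avoids the divisor;
\item at bad places, one uses lower semicontinuity on the compact space $C^e(F)$, where $F/K_v$ is a finite extension into which every extension of $K_v$ of degree at most $e!$ embeds.
\end{itemize}

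You instead split the comparison into two directions. The trivial one bounds $\sup\|1_{x_0}\|^{md-e}$. The hard one is a maximum principle for the fixed section $s'$: the reduction (Gauss point) argument at good places, and at the finitely many bad places, cutting out a disc $U$ around $x_0$ and applying maximum modulus on $\overline{U}$.

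Your route is more elementary and self-contained. It needs no Jacobian, Poincar\'e bundle, parametrization of divisors by $C^e$, or bounded-degree compactness. It also makes the linear dependence on $m$ explicit: the constants are $-\inf_{C\setminus U}\log\|1_{x_0}\|$ and the oscillation of the metrics of $\overline{L}_0$ and $\overline{x}_0$ against local frames on $\overline{U}$. The paper's route packages the uniformity once, into a general statement about Green functions of effective relative divisors in families. It treats good and bad places by a single mechanism and fits the Picard-bundle picture used in the next subsection.
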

	
	\begin{proof}
		Denote by $C^e$ the $e$-fold product $C\times\cdots\times C$, $p_i:C^e\times C\to C$ the projection to the $i$-th factor of $C^e$, $p_C:C^e\times C\to C$ the projection to the last factor, and $p_{i,C}:C^e\times C\to C\times C$ the projection to the $i$-th and last factors. We have a line bundle
		$$M=\sum_{i=1}^ep_{i,C}^*\shO(\Delta)-\sum_{i=1}^ep_i^*\shO(x_0)-ep_C^*\shO(x_0)\in\Pic(C^e\times C)$$
		with a rigidification along $C^e\times x_0$. It corresponds to a morphism
		$$\phi=\sum_{i=1}^ei_{x_0}\circ p_i:C^e\to J$$
		which induces an isomorphism of rigidified line bundles $(\phi\times \mathrm{id})^*\shP_C\cong M$. Endow $M$ with an adelic metric such that the displayed isomorphism becomes an isometry $(\phi\times \mathrm{id})^*\overline{\shP}_C\cong \overline{M}$.\\
		\par For any $s\in H^0(C,mL_0+ex_0)\setminus\{0\}$, the zero divisor of $s$ corresponds to a point $R\in C^e(\overline{K})$ such that $\phi(R)=mQ$. Also, $s$ defines a section of
		$$m\overline{L}=(\overline{\shP}_C+md\pi_C^*\overline{x}_0)|_{mQ\times C}.$$
		Let $\tilde{s}$ be its pullback to $(\overline{M}+mdp_C^*\overline{x}_0)|_{R\times C}$. Set
		$$\overline{M}'=\overline{M}+\sum_{i=1}^ep_i^*\shO(\overline{x}_0)+ep_C^*\shO(\overline{x}_0).$$
		Then, for $v\in M_K$, the $v$-$\sup$ norms of $s\in W_0$ with respect to $\overline{W}_0$ and $\overline{W}_0'$ are
		$$\|\tilde{s}\|_{\overline{M}+mdp_C^*\overline{x}_0,v,\sup}=B_{s,v}\|1|_{R\times C}\|_{\overline{M}'+(md-e)p_C^*\overline{x}_0,v,\sup}$$
		and
		$$\|\tilde{s}\|_{\overline{M}+ep_C^*\overline{x}_0,v,\sup}=B_{s,v}\|1|_{R\times C}\|_{\overline{M}',v,\sup}$$
		respectively, where $B_{s,v}>0$ depends on the scalar multiple chosen for $s$. For archimedean places, we may switch between $\sup$ and $L^2$ norms by Gromov's inequality. Since
		$$\deg R\le e!,$$
		the result follows from Corollary~\ref{cor:sup-norm-bound}.
	\end{proof}
	
	\paragraph{} Combining Propositions~\ref{prop:filtration} and~\ref{prop:norm-comparison}, we obtain the following identity.
	
	\begin{corollary}\label{cor:top-transform-sum}
		In the notation above, we have
		\[
		\sum_{j=0}^{g}F[m\overline{L}](md-e_j)=\widehat{\mathrm{deg}}(\det \overline{W}_0)+O(m).
		\]
	\end{corollary}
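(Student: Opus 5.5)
Our plan is to combine Proposition~\ref{prop:filtration} with Proposition~\ref{prop:norm-comparison}. By Proposition~\ref{prop:filtration},
\[
\sum_{j=0}^{g}F[m\overline{L}](md-e_j)=\widehat{\mathrm{deg}}(\det \overline{W}_0)+O(\log m),
\]
where $\overline{W}_0$ is normed with respect to $m\overline{L}$. So it suffices to show that $\widehat{\mathrm{deg}}(\det\overline{W}_0)$ changes only by $O(m)$ when the norms of $\overline{W}_0$ are replaced by those of $\overline{W}_0'$, that is,
\[
\widehat{\mathrm{deg}}(\det \overline{W}_0)=\widehat{\mathrm{deg}}(\det \overline{W}_0')+O(m).
\]
Here the $O(\log m)$ term is absorbed into $O(m)$. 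This is how we read the statement "in the notation above": the $\overline{W}_0$ on the right should be understood as either normed space, since the two agree up to $O(m)$.

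The key point is that $\dim W_0=g+1$ is independent of $m$. For each place $v$, write the two norms as $\|\cdot\|_v$ and $\|\cdot\|'_v$. By Proposition~\ref{prop:norm-comparison}(2), for $v\in S$ we have
\[
e^{-c(m+1)}\|\cdot\|'_v\le\|\cdot\|_v\le e^{c(m+1)}\|\cdot\|'_v .
\]
The induced norms on $\det W_0=\wedge^{g+1}W_0$ then differ by a factor of at most $e^{(g+1)c(m+1)}$, up to a harmless bounded constant depending only on $g$ at archimedean places. This holds for the $L^2$ (Hermitian) determinant norms by comparing Gram determinants via the extremal characterization of eigenvalues. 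It holds for the non-archimedean sup norms via the determinant norm defined by orthogonal bases (Hadamard-type inequalities, e.g.\ Boucksom--Chen's framework). By Proposition~\ref{prop:norm-comparison}(1), for $v\notin S$ the norms coincide, so the determinant norms coincide too.

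Summing $n_v\log$ of the ratio over the finite set $S$ then gives
\[
\bigl|\widehat{\mathrm{deg}}(\det\overline{W}_0)-\widehat{\mathrm{deg}}(\det\overline{W}_0')\bigr|\le \sum_{v\in S}n_v\bigl((g+1)c(m+1)+O(1)\bigr)=O(m).
\]
Combined with the first display, this proves the corollary.

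The only delicate point is the non-archimedean determinant-norm comparison, since sup norms need not be ultrametric-orthogonalizable exactly. We handle it by using the monotonicity of the determinant norm in the norm, which holds for any norm and any definition of the determinant norm (as the quotient/sup definition $\|s_1\wedge\cdots\wedge s_r\|=\inf\prod\|s_i\|$). Under scaling by a constant $\lambda$, this norm scales by $\lambda^{r}$. Hence the bound follows without any orthogonality.
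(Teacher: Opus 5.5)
Your proposal is correct and is essentially the paper's argument. The paper simply combines Proposition~\ref{prop:filtration} with Proposition~\ref{prop:norm-comparison}, and you have supplied the routine determinant-norm comparison that this combination needs: $\dim W_0=g+1$ and the finite set $S$ are both independent of $m$, so the $O(m)$ bound follows. You are also right that the meaningful right-hand side is $\widehat{\mathrm{deg}}(\det\overline{W}_0')$, which is how the corollary is used immediately afterwards via $(\det\overline{\shE}_N)|_{Q_0}$.
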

	
	\subsection{Determinant of the Picard bundle}\label{sec:picard-determinant}
	
	\paragraph{} We now relate the adelic degree in the previous subsection to the N{\' e}ron-Tate height. The key construction is a global version of the determinant space.
	
	\paragraph{} Let $p_J$ and $p_C$ be the projections from $J\times C$ to its factors. For $N\in\mathbb{Z}$, denote by
	$$\shE_N=(p_J)_*(\shP_C\otimes p_C^*\shO(Nx_0)).$$
	For $N\ge 2g-1$, by the theorem on cohomology and base change, $\shE_N$ is a locally free sheaf of rank $N+1-g$, called the \textit{Picard bundle}. In this case, for $Q'\in J(\overline{K})$ corresponding to $L_0'\in \Pic^0(C_{k(Q')})$, the fiber of $\shE_N$ at $Q'$ is naturally identified with $H^0(C_{k(Q')},L_0'+Nx_0)$. 
	
	\paragraph{} Since $\shP_C$ admits a natural adelic extension $\overline{\shP}_C$, we may take an adelic extension $\overline{x}_0$ of $x_0$ and endow $\shE_N$ with a continuous $M_K$-metric which is the $\sup$ norm at non-archimedean places and $L^2$ norm at archimedean places, with respect to the identification with $H^0(C_{k(Q')},L_0'+Nx_0)$. Denote this metrized vector bundle by $\overline{\shE}_N$.
	
	\paragraph{} Now let $N=2g$ and $Q_0=mQ$, where $Q$ corresponds to $L_0=L-dx_0\in\Pic^0(C)$ as before. We may rephrase Corollary~\ref{cor:top-transform-sum} as
	$$\sum_{j=0}^{g}F[m\overline{L}](md-e_j)=\widehat{\mathrm{deg}}\left((\det \overline{\shE}_N)|_{Q_0}\right)+O(m).$$
	
	\paragraph{} We now compute the underlying line bundle $\det\shE_N$.
	
	\begin{lemma}\label{lem:picard-determinant}
		For $N\ge 2g-1$, the determinant line bundle satisfies
		\[
		\det\shE_N=\shO(-\theta_{x_0})\in\Pic(J).
		\]
	\end{lemma}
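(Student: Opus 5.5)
The plan is to pass from $\shE_N$ to the Knudsen–Mumford determinant of cohomology $\lambda_N:=\det Rp_{J*}(\shP_C\otimes p_C^*\shO(Nx_0))$, which is defined for every $N\in\mathbb{Z}$. First, the determinant does not depend on $N$. Consider the exact sequence on $J\times C$
\[
0\to \shP_C\otimes p_C^*\shO(Nx_0)\to \shP_C\otimes p_C^*\shO((N+1)x_0)\to \bigl(\shP_C\otimes p_C^*\shO((N+1)x_0)\bigr)|_{J\times x_0}\to 0 .
\]
The cokernel is supported on $J\times x_0\cong J$. There $\shP_C$ is trivial by the rigidification, and $p_C^*\shO((N+1)x_0)$ restricts to the constant line $T_{x_0}C^{\otimes -(N+1)}$, which is trivial over $K$. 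Multiplicativity of the determinant of cohomology in short exact sequences then gives $\lambda_{N+1}\cong\lambda_N\otimes\shO_J\cong\lambda_N$ for all $N$.

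Second, for $N\ge 2g-1$ we have $H^1(C_{k(Q')},L_0'+Nx_0)=0$ for every $Q'$, since the degree exceeds $2g-2$. By cohomology and base change, $Rp_{J*}$ is then the locally free sheaf $\shE_N$ placed in degree $0$, so $\det\shE_N=\lambda_N$. Combining with the first step, $\det\shE_N=\lambda_{g-1}$ for every $N\ge 2g-1$.

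Third, compute $\lambda_{g-1}$. Locally on $J$, and in fact globally by the usual construction, take $N'\gg0$. From the sequence
\[
0\to\shP_C(( g-1)x_0)\to\shP_C(N'x_0)\to\shP_C(N'x_0)|_{J\times (N'-g+1)x_0}\to0
\]
we obtain $Rp_{J*}\shP_C((g-1)x_0)\simeq[\shE_{N'}\xrightarrow{\ \varphi\ }\shF]$, a two-term complex of vector bundles of the same rank $N'-g+1$, since $\chi=0$ in degree $g-1$. Hence $\lambda_{g-1}=\det\shE_{N'}\otimes(\det\shF)^{-1}=\shO(-\operatorname{div}\det\varphi)$. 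The map $\varphi$ fails to be injective at $Q'$ exactly when $h^0(L_0'+(g-1)x_0)>0$, that is, when $L_0'=D-(g-1)x_0$ for some effective divisor $D$ of degree $g-1$. This locus is precisely $\theta_{x_0}=i_{x_0}(C^{(g-1)})$, so $\operatorname{div}\det\varphi$ is an effective divisor with support $\theta_{x_0}$.

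The main obstacle is showing that $\operatorname{div}\det\varphi$ has multiplicity one along the irreducible divisor $\theta_{x_0}$. To prove this, I would work at a general point $Q'$ of $\theta_{x_0}$, where $h^0=1$ by Riemann's theorem. There the order of vanishing of $\det\varphi$ along a transverse curve is computed by the standard deformation argument: at first order it equals the length of the cokernel, and it is $1$ as soon as the derivative of $\varphi$ in a normal direction maps $\ker\varphi_{Q'}$ nontrivially to $\operatorname{coker}\varphi_{Q'}$. That derivative is given by cup product with a tangent vector $\xi\in H^1(C,\shO_C)$, namely $H^0(L)\to H^1(L)$ with $L=L_0'+(g-1)x_0$. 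Its nonvanishing for some $\xi$ amounts to the Petri-type statement that the unique section $s$ of $L$ and the unique section of $K_C-L$ multiply to a nonzero differential, which is clear. Equivalently, I could cite that the theta divisor is reduced and that the Fitting-ideal divisor of such a complex is reduced exactly where the kernel has generic dimension one. This gives $\lambda_{g-1}=\shO(-\theta_{x_0})$, and hence $\det\shE_N=\shO(-\theta_{x_0})$.
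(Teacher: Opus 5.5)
Your first two steps coincide with the paper's. The same exact sequence along $J\times x_0$ makes $\det\mathbf{R}p_{J*}(\shP_C+Np_C^*x_0)$ independent of $N$. (The restricted line is $T_{x_0}C^{\otimes(N+1)}$, not its dual, but any $K$-line pulls back to a trivial bundle, so this slip is harmless.) Vanishing of $R^1$ for $N\ge 2g-1$ then identifies it with $\det\shE_N$.

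The routes split at $N=g-1$. The paper simply cites Moret-Bailly [Mor85, Prop.~2.4.2 and Cor.~2.7.10(ii)] for $\det\mathbf{R}p_{J*}(\shP_C+(g-1)p_C^*x_0)=\shO(-\theta_{x_0})$. You instead prove this by hand: you resolve by the square complex $[\shE_{N'}\xrightarrow{\varphi}\mathcal{F}]$, observe that $\det\varphi$ is a section of the inverse of the determinant of cohomology, identify its vanishing locus with $i_{x_0}(C^{(g-1)})$, and compute the multiplicity at first order.

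That first-order computation is sound. At a point where $h^0(L)=h^1(L)=1$, the derivative of $\det\varphi$ in the direction $\xi\in H^1(C,\shO_C)$ is, up to a nonzero constant, the Serre duality pairing of $\xi$ with $s\omega\in H^0(C,K_C)$. Since $C$ is integral, $s\omega\neq 0$, so some $\xi$ pairs nontrivially with it. This is the $r=0$ case of the classical tangent-space computation for $W^r_d$.

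Each approach buys something different. Your version is self-contained and makes two conventions transparent: the sign is $-\theta_{x_0}$ because the canonical section lives in the inverse, and the divisor is $\theta_{x_0}$ rather than $[-1]^*\theta_{x_0}$ because $\shP_C|_{Q'\times C}\cong L_0'$. The cost is that you must invoke the Kodaira--Spencer identification of $\ker\varphi_{Q'}\to\operatorname{coker}\varphi_{Q'}$ with cup product, and the fact that a general point of $\theta_{x_0}$ has $h^0=1$. The paper's citation is shorter but leaves these convention checks to the reader.

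One caution: drop your ``equivalently'' shortcut. A Fitting-ideal divisor need not be reduced where the kernel is generically one-dimensional; the $1\times 1$ matrix $(t^2)$ already fails. Reducedness here really rests on the Petri-type nonvanishing you proved.
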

	
	\begin{proof}
		For any $N\in\mathbb{Z}$, we have an exact sequence
		$$0\to \shP_C+(N-1)p_C^*x_0\to\shP_C+Np_C^*x_0\to(\shP_C+Np_C^*x_0)|_{J\times x_0}\cong\shO_{J\times x_0}\to 0.$$
		Applying $\det\mathbf{R}p_{J*}$, we obtain
		$$\det\mathbf{R}p_{J*}(\shP_C+(N-1)p_C^*x_0)=\det\mathbf{R}p_{J*}(\shP_C+Np_C^*x_0).$$
		For $N\ge 2g-1$, by the theorem on cohomology and base change, all higher direct images vanish. Therefore,
		$$\det\mathbf{R}p_{J*}(\shP_C+Np_C^*x_0)=\det\shE_N.$$
		Now we set $N=g-1$. The identity
		$$\det\mathbf{R}p_{J*}(\shP_C+Np_C^*x_0)=\shO(-\theta_{x_0})$$
		follows from Proposition 2.4.2 and Corollary 2.7.10(ii) of [\hyperlink{Mor85}{Mor85}].
	\end{proof}
	
	\begin{proposition}\label{prop:degree-height}
		Let $N=2g$. As $Q_0\in J(K)$ varies with $\hat{h}(Q_0)\to\infty$, we have
		\[
		\widehat{\mathrm{deg}}\left((\det \overline{\shE}_N)|_{Q_0}\right)=-([K:\mathbb{Q}]+o(1))\hat{h}(Q_0)+O(1).
		\]
	\end{proposition}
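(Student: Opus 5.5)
The plan is to identify $\det\overline{\shE}_N$, an adelic line bundle on $J$ (possibly after passing to a model), with $-\overline{\theta}$ for some adelic extension $\overline{\theta}$ of the divisor $\theta_{x_0}$. Its height function would then be a Weil height for $-\theta_{x_0}$, and the canonical-height asymptotics do the rest. By Lemma~\ref{lem:picard-determinant}, the underlying line bundle of $\det\shE_N$ is $\shO(-\theta_{x_0})$. By construction, the metric on $\det\shE_N$ is a continuous $M_K$-metric (determinant of sup norms at finite places, $L^2$ at archimedean ones) that varies continuously in $Q'\in J(\C_v)$. I expect it to be the trivial model metric at all but finitely many places, because $\overline{\shP}_C$ and $\overline{x}_0$ come from integral models outside a finite set $S$. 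Hence $\widehat{\deg}((\det\overline{\shE}_N)|_{Q_0})=-[K:\mathbb{Q}]\,h_{\theta_{x_0}}(Q_0)+O(1)$ for any fixed Weil height $h_{\theta_{x_0}}$. The $O(1)$ should be uniform over $Q_0\in J(K)$ once we know two things: the metric is bounded with respect to some model metric at the places in $S$ (compactness of $J(\C_v)$), and it equals the model metric outside $S$.

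Concretely, I would proceed in three steps.
\begin{enumerate}
\item Fix an adelic extension $\overline{\theta}$ of $\theta_{x_0}$, say a model one.
\item Compare $\det\overline{\shE}_N$ with $-\overline{\theta}$ on $J$. The difference is an adelic line bundle with trivial underlying bundle, so Lemma~\ref{lem:trivial-underlying-divisor} gives a uniform bound on $\log\|1\|$ that is nonzero at only finitely many places. This requires that $\det\overline{\shE}_N$ be a genuine adelic metric: continuous, and coming from a model at almost all places.
\item Use $\Theta=\theta_{x_0}+[-1]^*\theta_{x_0}$, which gives $h_{\theta_{x_0}}=\tfrac12 h_\Theta+\tfrac12(h_{\theta_{x_0}}-h_{[-1]^*\theta_{x_0}})$. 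The first term is $\hat h(Q_0)+O(1)$ by the normalization in \S\ref{sec:notation}. The second is a Weil height of the antisymmetric class $\theta_{x_0}-[-1]^*\theta_{x_0}\in\Pic^0(J)$, hence $O(\sqrt{\hat h(Q_0)}+1)$ by the standard bound for heights attached to algebraically trivial classes. That term is $o(\hat h(Q_0))$ as $\hat h(Q_0)\to\infty$, which yields the stated $-([K:\mathbb{Q}]+o(1))\hat h(Q_0)+O(1)$. The $o(1)$ in the statement is exactly this linear part of the height.
\end{enumerate}

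The main obstacle is step 2: showing that the pointwise-defined metric on $\det\shE_N$ is continuous and is a model metric at almost all places. At the good places, I would take the integral model $\mathcal{J}\times\mathcal{C}$ with the model of $\shP_C+Np_C^*x_0$. By cohomology and base change (which holds fiberwise over $\mathcal J$ since $N\ge 2g-1$ on every fiber, the relevant $H^1$ vanishing on the special fibers for almost all primes), the pushforward is a locally free integral lattice. The sup norm on each fiber is then the lattice norm, so the determinant metric is the model metric of $\det$ of the integral pushforward. At the finitely many bad and archimedean places, continuity follows from continuity of the metric on $\overline{\shP}_C$ together with the local freeness of $\shE_N$. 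Boundedness against a model metric then follows from compactness of $J(\C_v)$, or from the argument in Lemma~\ref{lem:sup-norm-bound}(2) applied to the family $J\times C\to J$.
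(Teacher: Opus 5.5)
Your proposal is correct and follows essentially the paper's argument. The paper also uses the algebraic equivalence of $\theta_{x_0}$ and $[-1]^*\theta_{x_0}$ to reduce to showing the degree is $-[K:\mathbb{Q}]$ times a Weil height for $\theta_{x_0}$ up to $O(1)$. It then identifies the metric, at almost all places, with the model metric coming from an integral pushforward on a neighborhood of the generic fiber where that pushforward is locally free and $R^1$ vanishes. At the finitely many remaining places, the paper does not prove that $\det\overline{\shE}_N$ itself is adelic; instead it compares it with an auxiliary adelic $\overline{\shE}'_N$ at cost $O(1)$, via the compactness argument of Lemma~\ref{lem:sup-norm-bound}(2).

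One small slip: for non-archimedean $v$ the space $J(\C_v)$ is not compact. You only need the bound on $J(K_v)$, which contains $Q_0$ and is compact.
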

	
	\begin{proof}
		Since $\theta_{x_0}$ and $[-1]^*\theta_{x_0}$ are algebraically equivalent, their associated Weil heights differ by $o(\hat{h}(Q_0))$. Therefore, it suffices to show that for a Weil height $h_1$ on $J$ associated to $\theta_{x_0}$,
		$$\widehat{\mathrm{deg}}\left((\det \overline{\shE}_N)|_{Q_0}\right)=-[K:\mathbb{Q}]h_1(Q_0)+O(1).$$
		The main problem is that the metric on $\overline{\shE}_N$ is not necessarily adelic. We now modify it to obtain an adelic metric.
		
		\paragraph{} Denote by $\mathcal{J}$ and $\mathcal{C}$ integral projective models of $J$ and $C$. Take $S_1\subset M_{K,f}$ finite so that $\shJ\times_{O_K}\shC$ is regular over $\Spec O_K\setminus S_1$. By the coherence condition, we may take a line bundle $\shP_C'\in\Pic((\shJ\times_{O_K}\shC)_{\mathrm{reg}})$ inducing $\overline{\shP}_C$ for all $v\in M_{K,f}\setminus(S_1\cup S_2)$, where $S_2$ is a finite set of finite places disjoint from $S_1$.
		
		\paragraph{} Set $\mathcal{U}=\Spec O_K\setminus(S_1\cup S_2)$. Denote by
		$$p_\shJ':\shJ_\mathcal{U}\times_\mathcal{U}\shC_\mathcal{U}\to \shJ_\mathcal{U}$$
		the restriction of $p_\shJ$, and $p_\shC'$ similarly. Extend the divisor $x_0$ on $C$ to a Cartier divisor $\mathcal{X}_0$ on $\shC_{\mathcal{U}}$. Let $\shE_N'$ be the coherent sheaf $$(p_\shJ')_*((\shP_C'+Np'^*_{\shC}\shO(\mathcal{X}_0))|_\mathcal{U}).$$
		
		\paragraph{} Take a neighborhood of the generic fiber $J\subset\shJ$, over which $\shE_N'$ is locally free and
		$$R^1(p_\shJ')_*((\shP_C'+Np'^*_{\shC}\shO(\mathcal{X}_0))|_\mathcal{U})$$
		vanishes. Take $S_3\subset M_{K,f}$ finite so that such a neighborhood contains the preimage of $\Spec O_K\setminus (S_1\cup S_2\cup S_3)$. Then for
		$$v\in M_{K,f}\setminus (S_1\cup S_2\cup S_3),$$
		the metric on $\overline{\shE}_N$ is induced by the model. Endow $\shE_N$ with any model metric for $v\in S_1\cup S_2\cup S_3$ and any smooth hermitian metric for $v\in M_{K,\infty}$. This construction gives rise to an adelic vector bundle $\overline{\shE}_N'$ extending $\shE_N$.
		
		\paragraph{} By a compactness argument analogous to that in Lemma~\ref{lem:sup-norm-bound}(2), we have
		$$\widehat{\mathrm{deg}}\left((\det \overline{\shE}_N)|_{Q_0}\right)=\widehat{\mathrm{deg}}\left((\det \overline{\shE}_N')|_{Q_0}\right)+O(1).$$
		Moreover, $Q_0\mapsto\widehat{\mathrm{deg}}\left((\det \overline{\shE}_N')|_{Q_0}\right)$ is $[K:\mathbb{Q}]$ times a Weil height associated to $\det\shE_N=\shO(-\theta_{x_0})$. This completes the proof.
	\end{proof}
	
	\paragraph{} Now we obtain the desired inequality:
	
	\begin{corollary}\label{cor:first-inequality}
		We have $D(\overline{L})\ge [K:\mathbb{Q}]\hat{h}(L-dx_0)$.
	\end{corollary}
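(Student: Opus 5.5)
We combine the boundary effect of Lemma~\ref{lem:hd-top-discrete-transforms} (with $n=1$, $\Delta(L)=[0,d]$) with Corollary~\ref{cor:top-transform-sum} and Proposition~\ref{prop:degree-height}. For $n=1$ we have
$$D(\overline{L})=\lim_{\eps\to0^+}\lim_{m\to\infty}\frac{1}{m^{2}}\sum_{\al\in\nu(mL),\ \mathrm{dist}(\al/m,\{0,d\})<\eps}\max\{-F[m\overline{L}](\al),0\}.$$
The positive part makes every term nonnegative. We may therefore drop all terms except the $g+1$ largest values $md-e_0,\dots,md-e_g$. For fixed $\eps$ and large $m$, these satisfy $\al/m>d-\eps$ because $e_j\le 2g$. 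This gives
$$D(\overline{L})\ \ge\ \limsup_{m\to\infty}\frac{1}{m^2}\sum_{j=0}^{g}\max\{-F[m\overline{L}](md-e_j),0\}\ \ge\ \limsup_{m\to\infty}\frac{1}{m^2}\Bigl(-\sum_{j=0}^g F[m\overline{L}](md-e_j)\Bigr).$$
We are free to fix the convenient metric: by Theorem~\ref{thm:hd-metric-independence}, $D$ does not depend on the metric, so we may assume smooth archimedean metrics and $\overline{L}-d\shO(\overline{x}_0)=\overline{\shP}_C|_{Q\times C}$, as in \S\ref{sec:highest-transforms}.

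Next we evaluate the sum of top transforms. By Corollary~\ref{cor:top-transform-sum}, in the reformulation with the Picard bundle for $N=2g$ and $Q_0=mQ$,
$$\sum_{j=0}^g F[m\overline{L}](md-e_j)=\widehat{\mathrm{deg}}\bigl((\det\overline{\shE}_{2g})|_{mQ}\bigr)+O(m).$$
The constant in the $O(m)$ comes from Proposition~\ref{prop:norm-comparison} and Gromov's inequality, and it is uniform in $m$.

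There are now two cases. If $Q$ is torsion, then $\hat h(Q)=0$. The desired inequality follows from the second form of Lemma~\ref{lem:hd-top-discrete-transforms}, since every term there is nonnegative, so $D(\overline{L})\ge 0$. If $Q$ is non-torsion, then $\hat h(mQ)=m^2\hat h(Q)\to\infty$, and Proposition~\ref{prop:degree-height} gives
$$\widehat{\mathrm{deg}}\bigl((\det\overline{\shE}_{2g})|_{mQ}\bigr)=-([K:\mathbb{Q}]+o(1))\,m^2\hat h(Q)+O(1).$$
Dividing by $m^2$ and letting $m\to\infty$ yields $D(\overline{L})\ge[K:\mathbb{Q}]\hat h(Q)=[K:\mathbb{Q}]\hat h(L-dx_0)$.

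The main point to check is the bookkeeping of error terms. The $O(m)$ from the norm comparison and the $O(\log m)$ from Gromov are both $o(m^2)$. The $o(1)\hat h$ error in Proposition~\ref{prop:degree-height} (which comes from replacing $\Theta$ by $2\theta_{x_0}$ and a Weil height by the canonical one) is $o(m^2)$ along the sequence $mQ$. Finally, Theorem~\ref{thm:hd-metric-independence} justifies passing to the particular adelic extension $\overline{\shP}_C|_{Q\times C}+d\shO(\overline{x}_0)$, with $\overline{x}_0$ chosen so that its archimedean metrics are smooth.
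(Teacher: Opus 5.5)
Your proposal is correct and is essentially the paper's own proof, written out in more detail. Like the paper, you keep only the $g+1$ top exponents $md-e_j$ in the positive-part form of Lemma~\ref{lem:hd-top-discrete-transforms}, evaluate their sum via Corollary~\ref{cor:top-transform-sum} and Proposition~\ref{prop:degree-height} at $Q_0=mQ$ using $\hat{h}(mQ)=m^2\hat{h}(Q)$, and dispose of the case $\hat{h}(L-dx_0)=0$ by the nonnegativity of $D(\overline{L})$.
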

	
	\begin{proof}
		It suffices to consider the case in which $\hat{h}(L-dx_0)\neq 0$. Combine Lemma~\ref{lem:hd-top-discrete-transforms}, Corollary~\ref{cor:top-transform-sum}, and Proposition~\ref{prop:degree-height}, together with the quadratic growth
		$$\hat{h}(mL_0)=m^2\hat{h}(L_0)$$
		of the N{\'e}ron-Tate height.
	\end{proof}
	
	\section{The second inequality}\label{sec:second-inequality}
	
	\paragraph{} In this section, we prove the converse inequality $D(\overline{L})\le [K:\mathbb{Q}]\hat{h}(L-dx_0)$, which completes the proof of Theorem~\ref{thm:main}. The proof strategy is rather different. Since the result is independent of adelic extension, it suffices to prove the result for a particular adelic extension, which turns out to be the admissible extension. Let $\overline{x}_0$ be any adelic extension of the divisor $x_0$.
	
	\begin{lemma}\label{lem:homogeneity-monotonicity}
		We have the following equality and inequality:
		\begin{enumerate}
			\item For $m>0$, $D(m\overline{L})=m^2D(\overline{L})$;
			\item $D(\overline{L}+\overline{x}_0)\le D(\overline{L})$.
		\end{enumerate}
	\end{lemma}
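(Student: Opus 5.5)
Both parts reduce to the boundary formula of Lemma~\ref{lem:hd-top-discrete-transforms}, specialized to curves. Here $n=1$ and $\Delta(L)=[0,d]$, so the boundary consists of two pieces: the points near $0$ and the points near $md$.

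The points near $0$ contribute nothing. Indeed, $\nu(mL)\cap[0,\eps m]$ has at most $\eps m+1$ elements. Each satisfies $-F[m\overline{L}](\al)\le Cm$: a uniform upper bound on $F$ holds by [Yua09, Lem.~2.4]. For the lower bound we use the Khovanskii-type argument in the proof of Lemma~\ref{lem:hd-top-discrete-transforms}, or more simply $\nu(mL)\supset\{0,\dots,md-2g\}$ together with superadditivity. Hence the total over these points is $O(\eps m^2)$, which vanishes in the double limit. The same applies to $\al\in[md-\eps m,\,md-2g]$, as long as we only need a bound of order $\eps m^2$. So it is cleaner to work with the one-face version from Remark~\ref{rmk:one-face} with $n=1$:
$$D(\overline{L})=\lim_{\eps\to0^+}\lim_{m\to\infty}\frac{1}{m^2}\sum_{\al\in\nu(mL),\,\al/m>d-\eps}\left(-F[m\overline{L}](\al)\right),$$
and we may also use the version with $\max\{\cdot,0\}$.

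\textbf{Part (1).} Write $\Sigma_\eps(\overline{L},m)$ for the sum above. Replacing $\overline{L}$ by $m'\overline{L}$ gives $\nu(mm'L)$ and $F[mm'\overline{L}]$, and the condition becomes $\al/(mm')>m'd-m'\eps$, i.e.\ the same condition after rescaling $\eps$ by $m'$. Therefore
$$\frac{1}{m^2}\Sigma_{m'\eps}(m'\overline{L},m)=m'^2\cdot\frac{1}{(mm')^2}\Sigma_\eps(\overline{L},mm').$$
Taking $m\to\infty$ along the subsequence $mm'$ is legitimate because the inner limit exists. Then letting $\eps\to0$ gives $D(m'\overline{L})=m'^2D(\overline{L})$. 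Alternatively, one can argue directly from the definition: $\int_{\Delta(mL)}c[m\overline{L}]=m^2\int_{\Delta(L)}c[\overline{L}]$ by the change of variables $c[m\overline{L}](m\al)=m\,c[\overline{L}](\al)$, and $\vol(m\overline{L})=m^2\vol(\overline{L})$ because the $\limsup$ is homogeneous. The second route is probably simpler, and it is the one I would write.

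\textbf{Part (2).} Let $\overline{L}'=\overline{L}+\overline{x}_0$, which has degree $d+1$. Multiplication by $s_{x_0}^m$, the canonical section of $m\overline{x}_0$, gives an injection $H^0(mL)\to H^0(mL')$ sending $\nu$-value $\al$ to $\al$. Therefore $F[m\overline{L}'](\al)\ge F[m\overline{L}](\al)+m\sum_v n_v\log\|s_{x_0}\|^{-1}_{v,\sup}$, and the error term is $O(m)$ by Lemma~\ref{lem:trivial-underlying-divisor}-type bounds. This shift maps nothing into the top face of $L'$, so it is the wrong comparison. Instead, the shift should send $\nu$-value $\al$ to $\al+m$, so that top values go to top values.

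To achieve this, multiply by a section of $m\,x_0$ vanishing to order $m$ at $x_0$, namely $s_{x_0}^m$ itself. In the local trivialization $s_0\cdot t$ of $L'$ near $x_0$, this raises $\nu$ by exactly $m$. So $F[m\overline{L}'](\al+m)\ge F[m\overline{L}](\al)-O(m)$, where the $O(m)$ accounts for the sup norm of $s_{x_0}^m$ and the change of local generator. This inequality gives an injection of the top-face index set $\{\al\in\nu(mL):\al/m>d-\eps\}$ into $\{\al'\in\nu(mL'):\al'/m>(d+1)-\eps\}$. The two sets have the same cardinality up to $O(1)$: both contain exactly the top $g+1$ special values together with all integers in between. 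Hence $\Sigma_\eps(\overline{L}',m)\le\Sigma_\eps(\overline{L},m)+O(\eps m^2)+O(m)$. The extra points of $\nu(mL')$, if any, are $O(1)$ in number and each contributes $O(m)$, and for this we use the $\max\{\cdot,0\}$ version to control signs. Dividing by $m^2$ and taking limits gives $D(\overline{L}')\le D(\overline{L})$.

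The main obstacle is bookkeeping the uniform $O(m)$ bound for the multiplication by $s_{x_0}^m$ at all places. It is finite at almost all places and bounded at the rest by Lemma~\ref{lem:trivial-underlying-divisor}, and this uniform bound is exactly what lets the error vanish after division by $m^2$.
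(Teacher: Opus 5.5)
Your proposal is correct and follows the paper's proof. Part (1) is read off directly from the definitions. Part (2) rests on the shift inequality $F[m(\overline{L}+\overline{x}_0)](\al+m)\ge F[m\overline{L}](\al)+mF[\overline{x}_0](1)$, obtained by multiplying by $s_{x_0}^m$, which is then fed into the one-face boundary formula of Remark~\ref{rmk:one-face} with $n=1$.

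When you write it up, delete the opening claim that multiplication by $s_{x_0}^m$ preserves $\nu$; it is false, since it raises $\nu$ by $m$. Also replace the ``up to $O(1)$'' hedge with the observation that $s_{x_0}^m$ identifies $H^0(C,mL-\be x_0)$ with $H^0(C,m(L+x_0)-(\be+m)x_0)$ for $\be\ge0$. Hence $\al\mapsto\al+m$ is an exact bijection between the two top index sets, and no extra points need to be controlled.
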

	
	\begin{proof}
		(1) is straightforward from the definition. For (2), we have
		$$F[m\overline{L}](\al)-F[m(\overline{L}+\overline{x}_0)](\al+m)\le -mF[\overline{x}_0](1).$$
		So the desired result follows from the upper-boundary effect in Remark~\ref{rmk:one-face}, where we take $n=1$.
	\end{proof}
	
	\begin{lemma}\label{lem:uniform-transform-bound}
		Let $\overline{L}_1, \overline{L}_2$ be two adelic line bundles on $C$. Then there exists a constant $c>0$ such that for $a_1, a_2\in\mathbb{Z}_{\ge 0}$, $m\in \mathbb{Z}_{>0}$, and $\al\in\nu(m\overline{L})$, where $\overline{L}=a_1\overline{L}_1+a_2\overline{L}_2$,
		
		$$\frac{1}{m}F[m\overline{L}](\al)\le c\max\{a_1,a_2\}.$$
	\end{lemma}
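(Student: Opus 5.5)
We bound $F[m\overline{L}](\al)$ from above via the sup norm of any section $s\in H^0(C,mL)(\al)$, and then reduce to uniform bounds on $\overline{L}_1,\overline{L}_2$ separately. The plan is to dominate each $\overline{L}_i$ by a metrized line bundle with an effective, well-controlled section, so that both the degree and the metric contributions scale linearly in $a_1,a_2$. Since $F$ is the sum over places of $-\inf\log\|s\|_{v,\sup}$, it suffices to show two things. First, for every $s\in H^0(C,mL)(\al)$ we need $\sum_v n_v\log\|s\|_{v,\sup}\ge -c\,m\max\{a_1,a_2\}$. Second, we need a lower bound on $\log\|s\|_{v,\sup}$ at each place, uniform in $s$, which vanishes at all but finitely many places. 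Note that for a fixed section the sum over places of $\log|s(y)|_v$ at a suitable point relates to a height, and this is what we will exploit.

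The key step is a product-formula argument at a point. Fix finitely many points $y_1,\dots,y_A\in C(\overline{K})$ distinct from $x_0$, all defined over some finite extension $K'$, with $A=2\deg\overline{L}_1+2\deg\overline{L}_2+2$ (or more). For $s$ with $\al<md$ fixed, $s$ is nonzero, and its divisor has degree $m(a_1d_1+a_2d_2)$. We want some $y_i$ at which $s$ does not vanish; this need not hold. Instead, we use an averaging trick. The product formula gives
$$\sum_v n_v\log|s(y)|_v=-h_{mL}(y)+\text{(correction)},$$
but this fails when $s(y)=0$. So we argue with the sup norm directly, as follows.

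We use a sup-norm versus height inequality. For a nonzero $s\in H^0(C,mL)$ with divisor $Z$, one has
$$\sum_v n_v\log\|s\|_{v,\sup}\ \ge\ -h_{\overline{mL}}(Z),$$
where $h$ is the height of the effective cycle $Z$ (computed by the $\overline{mL}$-metric). This follows from the arithmetic intersection formula $h_{\overline{M}}(C)=h_{\overline{M}}(\mathrm{div}\,s)-\int\log\|s\|$ restricted to the curve, i.e. from $\widehat{\deg}(\overline{M}|_C)$ bounded by $\deg$ times sup norms. More elementarily, choose a point $y$ outside $\mathrm{supp}\,Z$ of bounded height and apply the product formula to $s(y)$. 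The bound $\sum_v\log|s(y)|_v\le\sum_v\log\|s\|_{v,\sup}$ then gives
$$-F[m\overline{L}](\al)\ge -h_{m\overline{L}}(y)\ge -m\,(a_1h_{\overline{L}_1}(y)+a_2h_{\overline{L}_2}(y)).$$
With $y$ ranging over the finite set $\{y_1,\dots,y_A\}$, at least one point avoids $Z$ whenever $A>\deg Z$. The difficulty is that $\deg Z$ grows with $m$, so a fixed finite set does not suffice. This is the main obstacle.

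To overcome it, we replace points by a height-bounded infinite family. By Northcott-type density we can choose, for each $m$, a point $y$ of degree at most $2$ over $K$ avoiding $Z$ whose heights $h_{\overline{L}_i}(y)$ are bounded by a constant independent of $m$. Such points exist: for instance, pull back $\P^1(K)$ points of bounded height via a fixed finite map $C\to\P^1$, which yields infinitely many points of bounded degree and uniformly bounded height. These heights are bounded below and above by Lemma~\ref{lem:trivial-underlying-divisor}, after comparing each $\overline{L}_i$ with a fixed model. Taking $c$ as $2\sup_y(|h_{\overline{L}_1}(y)|+|h_{\overline{L}_2}(y)|)$ over this family, normalized by $[K':K]$, gives
$$\frac{1}{m}F[m\overline{L}](\al)\le c\max\{a_1,a_2\},$$
uniformly in $\al$, as required.
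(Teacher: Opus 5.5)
Your proposal has a genuine gap: the product-formula step bounds the wrong quantity. By definition, $F[m\overline{L}](\alpha)=\sum_v n_vF_v[m\overline{L}](\alpha)$, and each $F_v$ is an infimum over $s\in H^0(C,mL)(\alpha)$ taken \emph{separately at each place}. Equivalently, $F$ is the adelic degree of $W_{\ge\alpha}/W_{>\alpha}$ with the quotient norms. Hence
\[
-F[m\overline{L}](\alpha)=\sum_v n_v\inf_s\log\|s\|_{v,\sup}\ \le\ \inf_s\sum_v n_v\log\|s\|_{v,\sup},
\]
and the near-minimizing sections at different places are in general different. For a single global $s$, your estimate $\sum_v n_v\log\|s\|_{v,\sup}\ge -h_{m\overline{L}}(y)$ bounds only the right-hand side from below. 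That says nothing about the left-hand side, so the inference ``$-F[m\overline{L}](\alpha)\ge -h_{m\overline{L}}(y)$'' does not follow.

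In fact the global sum is the wrong invariant altogether. By the product formula it is unchanged under $s\mapsto\lambda s$ with $\lambda\in K^\times$. So it never sees the normalization $s_0^{-m}s=t^\alpha+\cdots$, which is the whole content of $F_v$.

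Separately, your repair of the ``main obstacle'' contradicts Northcott's theorem. There are only finitely many points of bounded degree and bounded height, so pulled-back points of $\mathbb{P}^1(K)$ of bounded height cannot avoid divisors $Z$ of unbounded degree. Also, a degree-$2$ map $C\to\mathbb{P}^1$ exists only when $C$ is hyperelliptic.

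What is actually needed is the second item on your list, which you state but never prove, and which by itself suffices. For every place $v$ and every $s$ with $s_0^{-m}s=t^\alpha+\cdots$, you need
\[
\log\|s\|_{v,\sup}\ge -c_v\,m\max\{a_1,a_2\},
\]
with $c_v=0$ for $v$ outside a finite set $S$ independent of $a_1,a_2,m$. Summing over places then gives $F\le m\max\{a_1,a_2\}\sum_{v\in S}n_vc_v$, with no global argument needed.

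This is the paper's proof, in two parts:
\begin{enumerate}
\item Outside $S$, one works on a common integral model of $L_1,L_2$ (the proof of [Yua09, Lem.~2.4(2a)]). There the leading coefficient $1$ at $x_0$ forces $\|s\|_{v,\sup}\ge 1$, i.e.\ $F_v\le 0$.
\item At $v\in S$, one uses a Cauchy estimate on a disc $|t|_v\le r<1$ around $x_0$ (the proofs of [Nys14, Lem.~5.4] and [Yua09, Lem.~2.3]). Take $s_0=s_{0,1}^{a_1}s_{0,2}^{a_2}$, which does not change $F$ by the product formula. Then
\[
1=|a_\alpha|_v\le r^{-\alpha}\,\|s\|_{v,\sup}\,\sup_{|t|_v\le r}\|s_{0,1}\|_v^{-ma_1}\|s_{0,2}\|_v^{-ma_2}.
\]
Since $\alpha\le m\deg L\le m(a_1|\deg L_1|+a_2|\deg L_2|)$, the logarithm of the constant is $O(m\max\{a_1,a_2\})$. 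This linearity in $\max\{a_1,a_2\}$ is exactly what the lemma asserts.
\end{enumerate}
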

	
	\begin{proof}
		This is a variant of Lemma 2.4 in [\hyperlink{Yua09}{Yua09}]. Inspection of the proof shows that the resulting bound is at most a constant multiple of $\max\{a_1, a_2\}$.\\
		
		Applying the proof of [\hyperlink{Yua09}{Yua09}, Lem. 2.4(2a)] to a common model for $L_1$ and $L_2$, we obtain a finite set $S\subset M_K$, independent of $a_i$ and $m$, such that for any $v\in M_K\setminus S$, $\frac{1}{m}F_v[m\overline{L}]\le 0$. For $v\in S$, we want to show that $\frac{1}{m}F_v[m\overline{L}]\le c_v\max\{a_1,a_2\}$ for some constant $c_v$. This follows from the proofs of [\hyperlink{Nys14}{Nys14}, Lem. 5.4] and [\hyperlink{Yua09}{Yua09}, Lem. 2.3].
	\end{proof}
	
	\begin{proposition}\label{prop:second-inequality}
		We have $D(\overline{L})\le [K:\mathbb{Q}]\hat{h}(L-dx_0)$.
	\end{proposition}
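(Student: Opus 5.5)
Since $D(\overline{L})$ depends only on $L$ (Theorem~\ref{thm:hd-metric-independence}), I will choose a convenient metric: $\overline{L}=\overline{L}_0+d\overline{x}_0$, with $\overline{L}_0=\overline{\shP}_C|_{Q\times C}$ the admissible (flat) extension of $L_0=L-dx_0$ and $\overline{x}_0$ an admissible/Arakelov-type extension of $x_0$. Lemma~\ref{lem:homogeneity-monotonicity} lets me reduce to a convenient multiple. By (1), replacing $L$ by $kL$ multiplies both sides by $k^2$, since $\hat h(kL_0)=k^2\hat h(L_0)$. By (2), adding $x_0$ only decreases $D$ while leaving $L_0$, and hence the right-hand side, unchanged. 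So it suffices to bound $D(kL_0+a x_0)$ for a sparse family of pairs $(k,a)$ with $a/k\to\infty$. Consequently I only need an estimate of the shape
\[
D(\overline{L}_0+a\overline{x}_0)\le [K:\mathbb{Q}]\hat h(L_0)+o(1)\qquad (a\to\infty).
\]
In fact, using (2) to decrease, it is enough to control $D(k\overline L_0+a\overline x_0)/k^2$ with $a$ large relative to $k$.

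To estimate $D$, I use the one-face boundary effect of Remark~\ref{rmk:one-face} with $n=1$. This expresses $D$ through the sum of $-F[m\overline{L}](\al)$ over $\al\in(m(d-\eps)d,md]$, i.e. over $\al=md-j$ with $0\le j<\eps m d$. For such $\al$ I bound $-F$ from above by exhibiting explicit sections. A section $s\in H^0(C,mL)$ with $\nu(s)=md-j$ is, via $L=L_0+dx_0$, a section of $mL_0+jx_0$ nonvanishing at $x_0$ to leading order. Its norm is computed as in Proposition~\ref{prop:norm-comparison}: through the point $R\in C^{(j)}$ of its divisor with $\sum(R_i-x_0)=-mQ$ (up to sign conventions), pulling back the Poincaré bundle. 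For $j\ge g$ such $R$ exists, and I pick $R$ of small height.

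The key height computation is the arithmetic degree of $\overline{\shP}_C$ restricted to such points. By the Hodge index/adjunction for admissible metrics, the cost $-F$ should be governed by $\hat h(mQ)$ divided across the $j$ points, roughly $\hat h(mQ)/j$ plus $O(m)$. Summing over $g\le j\le \eps m d$ gives about $m^2\hat h(Q)\sum_j 1/j\sim m^2\hat h\log(\eps m)$, which is too big. This crude approach fails, so the sum must instead be organized by nested filtrations as in \S\ref{sec:highest-transforms}. There $\sum_{j\le J}F(md-j)$ equals $\widehat{\deg}\det H^0(mL_0+Jx_0)+O(m)$, and I compute this by Lemma~\ref{lem:picard-determinant} applied to $\shE_J$ at $mQ$. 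The determinant of the Picard bundle is $-\theta_{x_0}$ plus multiples of the trivial part for each $J$. So the telescoped total over the top strip equals $-[K:\mathbb Q]\hat h(mQ)$ plus error terms linear in $J$ times $m$. Since $J\le\eps md$, these errors contribute $O(\eps)$ after dividing by $m^2$ and letting $\eps\to0$.

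The main obstacle is making the error in the adelic degree of $\det\overline{\shE}_J|_{mQ}$ uniform in $J$ as $J$ grows with $m$, with an error $o(m^2)$ for $J\le\eps m$ that vanishes as $\eps\to0$. I would attack it using the admissible metric, for which the $L^2$/Faltings metric on $\det R\Gamma$ is computed exactly by the Faltings–Deligne Riemann–Roch (Moret-Bailly's isometry): the determinant of cohomology of $mL_0+Jx_0$ with admissible metrics has degree $\tfrac12\langle\cdot,\cdot-\omega\rangle+\widehat{\deg}\lambda$. Expanding gives $-[K:\mathbb Q]\hat h(mQ)+O(J^2+m)$, where $\langle x_0,x_0\rangle$ and $\langle x_0,\omega\rangle$ are constants. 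Comparing the Faltings norm with sup norms costs $O(\log)$ per place by Gromov's inequality, applied uniformly since the degrees are $O(m)$. The $O(J^2)\le O(\eps^2m^2)$ term and Lemma~\ref{lem:uniform-transform-bound} (to discard the positive part) then finish the argument.
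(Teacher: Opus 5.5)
\textbf{There is a genuine gap: your opening reduction runs the wrong way, and the two estimates your main computation depends on are not established.}

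\emph{The reduction.} Lemma~\ref{lem:homogeneity-monotonicity}(2) says that adding $\overline{x}_0$ can only \emph{decrease} $D$. So from $k^2D(\overline{L})=D(k\overline{L}_0+kd\,\overline{x}_0)$ you can only get $k^2D(\overline{L})\le D(k\overline{L}_0+a\overline{x}_0)$ for $1\le a\le kd$. For $a>kd$ the inequality is reversed. Hence bounds on $D(\overline{L}_0+a\overline{x}_0)$ as $a\to\infty$, or on pairs with $a/k\to\infty$, say nothing about $D(\overline{L})$.

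Used in the correct direction with $a=1$, this is exactly the paper's key step: $m^2D(\overline{L})\le D(m\overline{L}_0+\overline{x}_0)$. That bundle's Okounkov body is $[0,1]$, so Lemma~\ref{lem:uniform-transform-bound} makes the integral term $O(m)$. Everything then reduces to $\vol(m\overline{L}_0+\overline{x}_0)\ge(m\overline{\shL}_0+\overline{\mathcal{S}})^2$. This follows from Faltings' arithmetic Riemann--Roch applied to the powers $n(m\overline{\shL}_0+\overline{\mathcal{S}})$ as $n\to\infty$, together with $(\overline{\shL}_0)^2=-2[K:\mathbb{Q}]\hat{h}(L_0)$ from the Hodge index theorem. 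No analysis of the boundary strip is needed.

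\emph{The main computation.} It does not actually use the reduction: it evaluates the strip of width $J\approx\eps md$ by a single Riemann--Roch on $H^0(C,mL_0+Jx_0)$. That is a reasonable strategy in principle, but it rests on two unproved estimates, and your justification for the first is wrong.

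(a) Faltings' Riemann--Roch computes $\chi$ with respect to Faltings' measure on $H^0\otimes\mathbb{R}$, not the $L^2$ norm. To pass to $L^2$ (and then sup) norms you need $\log\mathrm{vol}(B_{L^2})\ge -o(m^2)$, or $\ge -O(J^2)$ with a uniform constant, for $H^0(mL_0+Jx_0)$. This must hold uniformly over the flat twists $mL_0$ and over $J\le\eps md$. Gromov's inequality only compares sup and $L^2$ norms on the same space and gives nothing here. The paper uses [Fal84, \S3, Thm.~2] only for powers of one fixed bundle, not for a two-parameter family whose degree grows only linearly in $m$.

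(b) Proposition~\ref{prop:norm-comparison} compares the norms from $m\overline{L}$ and from $m\overline{L}_0+J\overline{x}_0$ only for the fixed value $J=2g$. Its proof needs the points $R\in C^{e}$ to have degree $\le e!$, so that Corollary~\ref{cor:sup-norm-bound} applies. For $J\sim\eps m$ you need a new uniform comparison with total cost $O(Jm)$, and you do not supply one.

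Your expansion also omits the cross term $mJ\,\overline{\shL}_0\cdot\overline{\mathcal{S}}$; it is $O(\eps m^2)$, so it is harmless. Until (a) and (b) are proved, the upper bound does not follow. The paper's route avoids both issues by keeping the bundle fixed and letting only its power $n$ grow.
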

	
	\begin{proof}
		By Lipman's desingularization theorem, we may take an integral, regular, flat, projective model $\pi:\mathcal{C}\to\Spec O_K$ of $C$. We claim that $\pi$ has geometrically connected fibers. Indeed, let $\mathcal{S}$ be the closure of $x_0$ in $\mathcal{C}$. Then $\mathcal{S}$ is integral and finite over $\Spec O_K$. Therefore, $\mathcal{S}$ is isomorphic to $\Spec O_K$. Since $H^0(C,\shO_C)=K$ and $O_K$ is integrally closed, the finite morphism appearing in the Stein factorization of $\pi$ is an isomorphism. Hence by Zariski's connectedness theorem, for any $y\in\Spec O_K$, $\shC_y$ is connected. Also, $\shC_y$ contains a $k(y)$-point $\mathcal{S}_y$; therefore, $\shC_y$ is geometrically connected.\\
		\par Extend $\mathcal{S}$ to an arithmetic divisor $\overline{\mathcal{S}}$ on $\shC$, such that its associated hermitian line bundle is Arakelov-admissible. We may assume that $\overline{x}_0$ is induced from $\overline{\mathcal{S}}$. Applying the arithmetic Hodge index theorem [\hyperlink{YG25}{YG25}, Thm. 5.4.2], there exists a $\mathbb{Q}$-hermitian line bundle $\overline{\shL}_0$ extending $L_0$ whose intersection with any vertical arithmetic divisor is $0$. By passing to a multiple and applying Lemma~\ref{lem:homogeneity-monotonicity}(1), we may assume that $\overline{\shL}_0$ is a hermitian line bundle and that $\overline{L}_0$ is induced by $\overline{\shL}_0$. We may assume without loss of generality that $\overline{L}=\overline{L}_0+d\shO(\overline{x}_0)$. By Lemma~\ref{lem:homogeneity-monotonicity}, we have
		\begin{equation}m^2D(\overline{L})\le D(m\overline{L}_0+\shO(\overline{x}_0)), \forall m\in\mathbb{Z}_{>0}.\end{equation}
		\par By Lemma~\ref{lem:uniform-transform-bound}, there is a constant $c$, independent of $m$, such that
		\begin{equation}\int_{\Delta(m\overline{L}_0+\shO(\overline{x}_0))}c[m\overline{L}_0+\shO(\overline{x}_0)](\al)d\al\le cm.\end{equation}
		\par Now fix $m\in\mathbb{Z}_{>0}$ and take $n\ge 2g$. Set $\overline{\shL}_1=n(m\overline{\shL}_0+\shO(\overline{\mathcal{S}}))$ and let $\overline{L}_1$ be its associated adelic line bundle. Since the intersection of $\overline{\shL}_0$ with an arithmetic divisor having only an archimedean part is $0$, the first Chern form of $\overline{\shL}_0$ must vanish; therefore, $\overline{\shL}_0$ is admissible. We now apply Faltings' version of the arithmetic Riemann–Roch theorem [\hyperlink{Fal84}{Fal84}, \S4, Thm. 3] on arithmetic surfaces. It says
		$$\chi(\overline{\shL}_1)=\frac{1}{2}\overline{\shL}_1\cdot(\overline{\shL}_1-\overline{\omega}_{\shC})+\chi(\overline{\shO}_{\shC}).$$
		Here, $\overline{\omega}_\shC$ is the relative canonical sheaf $\omega_\shC$ equipped with Arakelov's admissible metric. For a finitely generated $O_K$-module $M$ together with a Haar measure on $M\otimes_{\mathbb{Z}}\mathbb{R}$, define
		$$\chi(M)=-\log\frac{\mathrm{vol}(M\otimes_{\mathbb{Z}}\R/M)}{\# M_{\mathrm{tors}}}+\mathrm{rk}(M)\cdot\log\mathrm{vol}(O_K\otimes_{\mathbb{Z}}\R/O_K).$$
		A measure on $H^0(\shC,\overline{\shL}_1)\otimes_\mathbb{Z}\R$ is given in [\hyperlink{Fal84}{Fal84}], and
		$$\chi(\overline{\shL}_1)=\chi(H^0(\shC,\overline{\shL}_1))-\chi(H^1(\shC,\overline{\shL}_1)).$$
		Since $H^1(C, L_1)=0$ by Serre duality, $H^1(\shC,\overline{\shL}_1)$ is torsion. We have $\chi(H^1(\shC,\overline{\shL}_1))\ge 0$. Therefore, for $M=H^0(\shC,\overline{\shL}_1)$,
		\begin{equation}-\log\mathrm{vol}(M\otimes_{\mathbb{Z}}\R/M)\ge \frac{\left(m\overline{\shL}_0+\shO(\overline{\mathcal{S}})\right)^2}{2}n^2+O(n).\end{equation}
		Set $C_\C=C\times_\mathbb{Q}\C$, which is a disjoint union of Riemann surfaces. Fix a measure on $C_{\C}(\C)$ and denote by $B_{L^2}, B_{\sup}$ the unit balls with respect to the $L^2$-norm and the supremum norm. Applying [\hyperlink{Fal84}{Fal84}, \S3, Thm. 2] to all archimedean places, we obtain:
		$$\log\mathrm{vol}(B_{L^2}(H^0(C_\C,L_{1\C})))\ge-o(n^2).$$
		Since $M\otimes_{\mathbb{Z}}\R$ is a hermitian real form of $H^0(C_\C,L_{1\C})$, we have
		$$\log\mathrm{vol}(B_{L^2}(M\otimes_{\mathbb{Z}}\R))\ge \frac{1}{2}\log\mathrm{vol}(B_{L^2}(H^0(C_\C,L_{1\C})))\ge -o(n^2).$$ 
		Combining this with Gromov's inequality [\hyperlink{GS92}{GS92}, Lem. 30], which says that there exists a constant $A$ independent of $n$ such that for any $s\in H^0(C_{\C},L_{1\C})$, we have $\|s\|_{\sup}\le An\|s\|_{L^2}$, we obtain
		\begin{equation}\log\mathrm{vol}(B_{\sup}(M\otimes_{\mathbb{Z}}\R))\ge-o(n^2)-O(n\log n)=-o(n^2).\end{equation}
		Therefore, applying [\hyperlink{YG25}{YG25}, Prop. 8.5.6] and adding (3) and (4), we obtain
		$$\hat{\chi}(C,\overline{L}_1)=\hat{\chi}(\shC,\overline{\shL}_1)\ge\left(\frac{\left(m\overline{\shL}_0+\shO(\overline{\mathcal{S}})\right)^2}{2}-o(1)\right)n^2.$$
		Taking the limit as $n\to\infty$, we obtain $$\vol(m\overline{L}_0+\shO(\overline{x}_0))\ge \left(m\overline{\shL}_0+\shO(\overline{\mathcal{S}})\right)^2.$$ Combining this with (2), we obtain, as $m\to\infty$,
		$$D(m\overline{L}_0+\shO(\overline{x}_0))\le -\frac{1}{2}(\overline{\shL}_0)^2m^2+O(m).$$
		Combining this with (1), we obtain
		$$D(\overline{L})\le -\frac{1}{2}(\overline{\shL}_0)^2=[K:\mathbb{Q}]\hat{h}(L-dx_0)$$
		by the arithmetic Hodge index theorem.
	\end{proof}
	
	\section{The higher dimensional case}\label{sec:higher dim}
	
	\paragraph{} We conclude with a brief discussion of the case in which the projective variety $X$ has dimension $n\ge 2$. Fix a regular point $x_0\in X(K)$ and a system of parameters $t=(t_1,\cdots,t_n)$ at $x_0$. Assume that $\overline{L}$ is an adelic line bundle on $X$ such that $L$ is big. Fix a local generator $s_0$ of $L$ at $x_0$. Set $$D(\overline{L})=\int_{\Delta(L)}c[\overline{L}](\al)\d\al-\frac{1}{(n+1)!}\vol(\overline{L}).$$
	This quantity is non-negative by [\hyperlink{Yua09}{Yua09}]. In \S\ref{sec:hd-reduction}, we show that $D(\overline{L})$ is independent of the adelic metric. In \S\ref{sec:strict-ineq}, we prove the following lower bound for $D(\overline{L})$:
	
	\begin{theorem}\label{thm:ample-ineq}
		Assume that $X$ is smooth and that there is a flag of smooth subvarieties
		$$X=X_0\supset X_1\supset\cdots\supset X_{n-1}=C\supset X_n=\{x_0\}$$
		such that $\codim(X_{i+1},X_i)=1$, $t_i$ is a local equation of $X_i$ in $X_{i-1}$ at $x_0$ for any $1\le i\le n$, and that $C$ is a curve of genus $g\ge 1$.\\
		\par Let $\overline{L}$ be an adelic line bundle on $X$ with an ample underlying line bundle. Choose positive real numbers $\lambda_i\ (0\le i\le n-2)$ such that $\lambda_iL|_{X_i}-\shO_{X_i}(X_{i+1})$ is nef. Set
		$$v_i=\shO_{X_i}(X_{i+1})|_C-\deg\left(\shO_{X_i}(X_{i+1})|_C\right)x_0\in\Pic^0(C)$$
		and $u=L|_C-\deg(L|_C)x_0\in \Pic^0(C)$. Then,
		$$D(\overline{L})\ge\frac{1}{\lambda_0\cdots\lambda_{n-2}}[K:\mathbb{Q}]\int_{\Delta_{n-1}}\hat{h}_C\left(u-\sum_{i=0}^{n-2}\frac{\al_i}{\lambda_i}v_i\right) \d\al_0\cdots\d\al_{n-2}.$$
	\end{theorem}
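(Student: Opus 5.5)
The plan is to reduce to the curve case via the boundary effect of Lemma~\ref{lem:hd-top-discrete-transforms}, restricting sections along the flag by taking successive jets. For $\al=(\al_1,\dots,\al_n)\in\nu(mL)$ and $s\in H^0(X,mL)(\al)$, write $\al=(\beta,\al_n)$ with $\beta\in\mathbb{N}^{n-1}$. The leading jet of $s$ along the flag is obtained by dividing successively by $t_1^{\al_1},\dots,t_{n-1}^{\al_{n-1}}$ and restricting. This gives a nonzero section
$$\sigma_\beta(s)\in H^0\Big(C,\ mL|_C-\sum_{i=0}^{n-2}\al_{i+1}\shO_{X_i}(X_{i+1})|_C\Big)$$
with $\nu_{t_n}(\sigma_\beta(s))=\al_n$. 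Call this line bundle $M_{m,\beta}$. Its degree is $m\deg L|_C-\sum\al_{i+1}\deg(\shO_{X_i}(X_{i+1})|_C)$, and its class modulo multiples of $x_0$ is $mu-\sum\al_{i+1}v_i$.

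The \emph{jet-norm inequality} I would prove is the following. There are adelic metrics on the $M_{m,\beta}$, obtained by restricting fixed adelic extensions of $L$ and $\shO_{X_i}(X_{i+1})$, and constants independent of $m$ and $\beta$, such that
$$\log\|\sigma_\beta(s)\|_{v,\sup}\le\log\|s\|_{v,\sup}+O(m)$$
at finitely many places, with no error term elsewhere. This is the analogue of Lemma~\ref{lem:sup-norm-bound} and Corollary~\ref{cor:sup-norm-bound}. At archimedean places I would use Cauchy estimates in a polydisc around the flag together with the nefness of $\lambda_iL|_{X_i}-\shO_{X_i}(X_{i+1})$, which is needed to control the norm of the canonical section of $\shO(X_{i+1})$ uniformly. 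At nonarchimedean places the same bound holds via integral models, with equality outside a finite set. The consequence is $F[m\overline{L}](\al)\le F_C[\overline{M}_{m,\beta}](\al_n)+O(m)$. This is the step I expect to be the main obstacle: the error must be $O(m)$ uniformly in $\beta$, and the $\lambda_i$ should enter only through the admissible range of $\beta$.

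Next, for fixed $\beta$ with $\beta/m$ in the interior, I would apply the curve argument of \S\ref{sec:first-inequality}, namely Corollary~\ref{cor:top-transform-sum} and Proposition~\ref{prop:degree-height}, to $M_{m,\beta}$. The sum of $-F$ over the top $g+1$ values of $\nu(M_{m,\beta})$ is then at least
$$[K:\mathbb{Q}]\,\hat h\Big(mu-\sum\al_{i+1}v_i\Big)-o(m^2)-O(m).$$
The constants are uniform because the Picard-bundle argument works over all of $J$, with $Q_0=mu-\sum\al_{i+1}v_i$. Those top $\al_n$ values lie in the set of $\al$ whose last coordinate is within $2g$ of the maximum. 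That set is near the upper boundary face of $\Delta(L)$ over $\beta/m$, so Lemma~\ref{lem:hd-top-discrete-transforms}, using the version with $\max\{-F,0\}$, lets us sum these contributions as lower bounds for $D(\overline{L})$.

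Finally, I would sum over $\beta$ and compare with the stated integral. This requires knowing which $\beta$ occur with full fibre, i.e.\ that for $\beta$ in the region $\{\al_{i+1}\le$ something governed by $\lambda_i\}$ the slice of $\Delta(L)$ has its top at the predicted degree. Using the nefness assumption, I expect the region $\{\beta/m:\ \sum_i \lambda_i\cdot(\text{rescaled }\al_{i+1})\le 1\}$ to be contained in the projection of $\Delta(L)$, with the top of the fibre given by restriction to $C$. Rescaling $\al_{i+1}/m=\al_i'/\lambda_i$ (with $\al_i'$ the coordinates on $\Delta_{n-1}$) yields the Jacobian factor $1/(\lambda_0\cdots\lambda_{n-2})$. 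The quadratic homogeneity of $\hat h_C$, extended to $\Pic^0(C)_\R$, gives
$$\frac{1}{m^{n+1}}\sum_\beta m^2\hat h_C\Big(u-\sum_i\frac{\al_i'}{\lambda_i}v_i\Big)\to\frac{1}{\lambda_0\cdots\lambda_{n-2}}\int_{\Delta_{n-1}}\hat h_C\Big(u-\sum_i\frac{\al_i'}{\lambda_i}v_i\Big)\,\d\al',$$
as a Riemann sum. Since the error terms are $o(m^{n+1})$ after summing over $O(m^{n-1})$ values of $\beta$, the inequality follows.
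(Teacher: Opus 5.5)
Your architecture matches the paper's: leading jets along the flag, the bound $F[m\overline{L}](\be,\al_n)\le F[\overline{M}_{m,\be}](\al_n)+O(m)$, the uniform top-$(g+1)$ curve estimate at $Q_0=mu-\sum\al_{i+1}v_i$, the $\max\{-F,0\}$ form of Lemma~\ref{lem:hd-top-discrete-transforms}, and a Riemann sum. The gap is the step you leave as ``I expect''. It is the only place where the hypotheses (smoothness of the $X_i$, ampleness of $L$, nefness of $\lambda_iL|_{X_i}-\shO_{X_i}(X_{i+1})$) enter, and you state it in a form too weak for the argument.

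The jet inequality bounds $-F[m\overline{L}](\be,\al_n)$ from below only for those $\al_n$ with $(\be,\al_n)\in\nu(mL)$. In general the fibre of $\nu(mL)$ over $\be$ is $\nu_{t_n}(V_\be\setminus 0)$, where $V_\be\subset H^0(C,M_{m,\be})$ is the subspace of leading jets of global sections, and $V_\be$ may be proper. If $V_\be$ misses any of the top $g+1$ values of $\nu(M_{m,\be})$, the height-carrying terms are absent from the boundary sum and the bound $[K:\mathbb{Q}]\hat{h}_C(mu-\sum\al_{i+1}v_i)$ is lost. You therefore need the discrete statement $V_\be=H^0(C,M_{m,\be})$ for every integral $\be$ with $\sum_i\lambda_i\al_{i+1}\le m-N$. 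A statement about the slices or projection of the continuous body $\Delta(L)$ does not imply this. ``Full fibre'' is the right notion, but you neither prove it nor say how nefness gives it. By contrast, the fact that these points lie in the boundary region needs no positivity: every $\al\in\nu(mL)$ satisfies $\al_n\le\deg M_{m,\be}$, so $\Delta(L)$ lies in the corresponding half-space.

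The paper obtains the surjectivity from Kodaira vanishing along the flag. In its indexing, $\al_j$ is paired with $\shO_{X_j}(X_{j+1})$; write $c_j=\lceil\lambda_j(\al_j+\delta_{ij})\rceil$. Fix $N\gg0$ and take $m\ge\sum\lambda_j\al_j+N$. Then the bundle $mL|_{X_i}-\sum_{j\le i}(\al_j+\delta_{ij})\shO_{X_j}(X_{j+1})|_{X_i}$ splits as $K_{X_i}$, plus the ample bundle $(m-\sum_{j\le i}c_j)L|_{X_i}-K_{X_i}$, plus the nef bundles $c_jL|_{X_i}-(\al_j+\delta_{ij})\shO_{X_j}(X_{j+1})|_{X_i}$. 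Hence its $H^1$ vanishes. It follows that restriction from $X_i$ to $X_{i+1}$ of $mL-\sum_{j\le i}\al_j\shO_{X_j}(X_{j+1})$ is surjective on $H^0$, and chaining over $i$ gives $V_\be=H^0(C,M_{m,\be})$.

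You have also misplaced the nefness hypothesis. The jet-norm estimate (Lemma~\ref{lem:jet-estimates}) needs no positivity. Its error $c_1(m+\sum\al_i+1)$ is automatically $O(m)$, because $\nu(mL)\subset m\Delta(L)$ and $\Delta(L)$ is bounded. So the step you flagged as the main obstacle is not where the difficulty lies.
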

	
	\paragraph{} Together with the toric case treated in [\hyperlink{Yua09}{Yua09}], Theorem~\ref{thm:ample-ineq} indicates that $D(\overline{L})$ depends on the system $t=(t_1,\cdots,t_n)$.
	
	\subsection{Jet-norm estimates}
	
	\paragraph{} In this subsection, we sketch a proof of the following jet-norm inequality, which can be seen as a relative version of [\hyperlink{Yua09}{Yua09}, Lem. 2.4]. The estimates will be useful for reducing higher-dimensional cases to the case of curves.
	
	\begin{lemma}\label{lem:jet-estimates}
		Let $X$ be a smooth projective variety over a number field $K$, and let $Y$ be a smooth subvariety of codimension $d$. Assume that there is a flag of smooth subvarieties
		$$X=X_0\supset X_1\supset\cdots\supset X_d=Y$$
		so that $\codim(X_{i+1},X_i)=1$. For $0\le i\le d-1$, denote by $\overline{M}_i$ an adelic extension of the line bundle $N_{X_{i+1}/X_i}|_{Y}$.\\
		\par Let $\overline{L}$ be an adelic line bundle on $X$. For any nonzero section $s\in H^0(X,mL)$, taking the expansion with respect to the flag yields a lexicographically least multi-index $$(\al_0,\cdots,\al_{d-1})\in(\mathbb{Z}_{\ge 0})^d$$
		and a canonical section $s^\circ\in H^0(Y,L^\circ)$, where $$\overline{L}^\circ=m\overline{L}|_Y-\sum_{i=0}^{d-1}\al_i\overline{M}_i.$$
		Then, there exist a constant $c_1$ and a finite set of places $S\subset M_K$, containing all archimedean places, such that for any $m$ and $s$,
		\begin{enumerate}
			\item For any $v\in M_K\setminus S$,
			$$\log\|s\|_{v,\sup}\ge\log\|s^\circ\|_{v,\sup}.$$
			\item For any $v\in S$,
			$$\log\|s\|_{v,\sup}\ge\log\|s^\circ\|_{v,\sup}-c_1\left(m+\sum_{i=0}^{d-1}\al_i+1\right).$$
		\end{enumerate}
	\end{lemma}
	
	\begin{proof}
		First assume that $d=1$. Extend $\shO_X(Y)$ to an adelic line bundle $\overline{\shO}_X(Y)$. We may assume that $\overline{M}_0$ has the metric induced via the natural line-bundle isomorphism $N_{Y/X}\cong\overline{\shO}_X(Y)|_Y$, since this will not affect the validity of the statement by Lemma~\ref{lem:trivial-underlying-divisor}. We show that for any place $v$, there is some constant $c_v$ such that
		$$\log\|s\|_{v,\sup}\ge\log\|s^\circ\|_{v,\sup}-c_v\left(m+\sum_{i=0}^{d-1}\al_i+1\right).$$
		For archimedean $v$, apply the Cauchy integral formula to holomorphic tubular neighborhoods of compact subsets of coordinate charts, and cover $Y$ with finitely many such subsets.\\
		\par For nonarchimedean $v$, let $R$ be the localization of $O_K$ at the prime associated to $v$. Take a flat projective model $\mathcal{X}$ of $X$ over $R$. Let $\mathcal{Y}$ be the closure of $Y$. We may assume that $\mathcal{X}$ is normal and that the metric on $\overline{L}$ at $v$ is induced by a line bundle $\shL$ on $\mathcal{X}$. Moreover, we may also assume that the metric on $\shO_X(Y)$ at $v$ is induced by a Cartier divisor $\mathcal{D}$ on $\mathcal{X}$ whose horizontal part is $\mathcal{Y}$ and whose vertical components have non-positive multiplicities. These modifications change the logarithm of the norm by $O(m+\al_0)$.\\
		\par After rescaling, we may assume that $\|s\|_{v,\sup}=1$. Therefore, $s$ lifts to a section $\tilde{s}\in H^0(\mathcal{X},m\shL)$. By definition, $s$ has order $\al_0$ along $Y$. Therefore, dividing $\tilde{s}$ by the defining rational section of $\shO_{\mathcal{X}}(\al_0\mathcal{D})$, we obtain a global section $\tilde{s}^\circ$ of $m\shL-\al_0\mathcal{D}$. Hence $s^\circ$ lifts to a section
		$$\tilde{s}^\circ|_{\mathcal{Y}}\in H^0(\mathcal{Y},(m\shL-\al_0\mathcal{D})|_\mathcal{Y}).$$
		Therefore, $\|s^\circ\|_{v,\sup}\le 1$.\\
		\par It remains to show that there is a finite set $S\subset M_{K,f}$, independent of $m$, such that for any finite place $v\notin S$, we have $\|s\|_{v,\sup}\ge\|s^\circ\|_{v,\sup}$. This follows from the preceding argument by allowing $v$ to vary and taking $\mathcal{X}$ to be obtained by base change from a fixed model of $X$ over $\Spec O_K$.\\
		\par We may generalize the above argument, replacing $m\overline{L}$ by $$m_1\overline{L}_1+\cdots+m_k\overline{L}_k,$$
		and replacing the error term $c_1(m+\sum_{i=0}^{d-1}\al_i+1)$ by $c_1(\sum_{i=1}^k|m_i|+\sum_{i=0}^{d-1}\al_i+1)$. Now for general $d$, we proceed by induction on $d$. Each time, we pass to a successive jet
		$$s^\circ_i\in H^0\left(mL|_{X_i}-\sum_{j=0}^{i-1}\al_jN_{X_{j+1}/X_j}|_{X_i}\right).$$
	\end{proof}
	
	\subsection{Strict inequality in higher dimensions}\label{sec:strict-ineq}
	
	\paragraph{} We now prove Theorem~\ref{thm:ample-ineq} using the jet-norm estimates and results for curves.
	
	\begin{proof}
		After approximating the $\lambda_i$ by rationals, we may assume that $\lambda_i\in\mathbb{Q}$. Choose $N$ sufficiently large. Denote by $\delta_{ij}$ the Kronecker delta. Then, for any integers $\al_i\ge 0$ and $m\ge\sum_{i=0}^{n-2}\lambda_i\al_i+N$, the line bundles
		$$\lceil\lambda_j(\al_j+\delta_{ij})\rceil L|_{X_i}-(\al_j+\delta_{ij})\shO_{X_j}(X_{j+1})|_{X_i}\ (i\ge j)$$
		are nef, and the line bundle
		$$\left(m-\sum_{j=0}^i\lceil\lambda_j(\al_j+\delta_{ij})\rceil\right)L|_{X_i}-K_{X_i}$$
		is ample. The Kodaira vanishing theorem gives:
		$$H^1\left(X_i,mL|_{X_i}-\sum_{j=0}^i(\al_j+\delta_{ij})\shO_{X_j}(X_{j+1})|_{X_i}\right)=0.$$
		Therefore, the natural map
		$$H^0\left(X_i,mL|_{X_i}-\sum_{j=0}^i\al_j\shO_{X_j}(X_{j+1})|_{X_i}\right)\to H^0\left(X_{i+1},mL|_{X_{i+1}}-\sum_{j=0}^i\al_j\shO_{X_j}(X_{j+1})|_{X_{i+1}}\right)$$
		is surjective. Consequently, $(\al_0,\cdots,\al_{n-2},\al_{n-1})\in \nu(mL)$ if and only if
		$$\al_{n-1}\in \nu\left(mL|_C-\sum_{j=0}^{n-2}\al_j\shO_{X_j}(X_{j+1})|_C\right).$$
		By Riemann–Roch, the above condition holds for any
		$$0\le \al_{n-1}\le m\deg(L|_C)-\sum_{j=0}^{n-2}\al_j\deg\left(\shO_{X_j}(X_{j+1})|_C\right)-2g$$
		and fails for any
		$$\al_{n-1}> m\deg(L|_C)-\sum_{j=0}^{n-2}\al_j\deg\left(\shO_{X_j}(X_{j+1})|_C\right).$$
		Passing to the closure implies that for any nonnegative real numbers $\be_0,\cdots,\be_{n-1}$, if $\sum_{i=0}^{n-2}\lambda_i\be_i\le 1$, then $(\be_0,\cdots,\be_{n-1})\in\Delta(L)$ if and only if
		$$\sum_{j=0}^{n-2}\be_j\deg\left(\shO_{X_j}(X_{j+1})|_C\right)+\be_{n-1}\le \deg(L|_C).$$
		\par Equip each $\shO_{X_i}(X_{i+1})$ with an adelic extension $\overline{\shO}_{X_i}(X_{i+1})$, and equip the restricted normal bundle $M_i=N_{X_{i+1}/X_i}|_C$ with induced adelic extension $\overline{M}_i$. By the jet-norm estimates (Lemma~\ref{lem:jet-estimates}), there is a constant $c_1$ such that for any $m$ and
		$$(\al_0,\cdots,\al_{n-2},\al_{n-1})\in \nu(mL),$$
		we have:
		$$F[m\overline{L}](\al_0,\cdots,\al_{n-1})\le F\left[m\overline{L}|_C-\sum_{i=0}^{n-2}\al_i\overline{M}_i\right](\al_{n-1})+c_1m.$$
		Now restrict attention to the case in which $m\ge\sum_{i=0}^{n-2}\lambda_i\al_i+N$ and
		$$\al_{n-1}\ge\deg\left(mL|_C-\sum_{i=0}^{n-2}\al_iM_i\right)-2g\ge N-2g\ge 0$$
		By our previous description of $\Delta(L)$, such points contribute to the boundary term in Lemma~\ref{lem:hd-top-discrete-transforms}. Moreover, the arguments of \S\S\ref{sec:highest-transforms} and \ref{sec:picard-determinant} give the following uniform estimate:\\
		
		\par \textit{Given a curve $C$ as in \S\ref{sec:first-inequality} and adelic line bundles $\overline{L}_i$ over $C$ $(1\le i\le k)$. For any integers $m_i\ (1\le i\le k)$ with $\sum_{i=1}^km_i\deg L_i\ge 2g$, set $\overline{H}=\sum_{i=1}^km_i\overline{L}_i$. We have}
		$$\sum_{\al\in\nu(H), \al\ge\deg(H)-2g}F[\overline{H}](\al)=-([K:\mathbb{Q}]+o(1))\hat{h}\left(H-(\deg H)x_0\right)+O\left(\sum_{i=1}^k|m_i|+1\right).$$
		\textit{Here, the $o(1)$-term tends to $0$ uniformly as $\hat{h}_C\left(mu-\sum_{i=0}^{n-2}\al_iv_i\right)\to\infty$, and its absolute value may be bounded by $1$.\\}
		
		\par Therefore, for $\al_0,\cdots,\al_{n-2}$ given, the sum of $F[m\overline{L}](\al_0,\cdots,\al_{n-1})$ over all such $\al_{n-1}$ is at most
		$$(-[K:\mathbb{Q}]+o(1))\hat{h}_C\left(mu-\sum_{i=0}^{n-2}\al_iv_i\right)+O(m).$$
		For any $\eps_1>0$, applying Lemma~\ref{lem:hd-top-discrete-transforms}, we obtain
		\begin{equation}\notag
			\begin{split}
				D(\overline{L}) &\ \ge\lim_{\eps\to 0^+}\lim_{m\to\infty}\frac{1}{m^{n+1}}\sum_{(*)}([K:\mathbb{Q}]-\eps_1)\hat{h}_C\left(mu-\sum_{i=0}^{n-2}\al_iv_i\right)\\
				&\ =\lim_{\eps\to 0^+}\lim_{m\to\infty}\frac{1}{m^{n-1}}\sum_{(*)}([K:\mathbb{Q}]-\eps_1)\hat{h}_C\left(u-\sum_{i=0}^{n-2}\frac{\al_i}{m}v_i\right)\\
				&\ =\frac{1}{\lambda_0\cdots\lambda_{n-2}}([K:\mathbb{Q}]-\eps_1)\int_{\Delta_{n-1}}\hat{h}_C\left(u-\sum_{i=0}^{n-2}\frac{\al_i}{\lambda_i}v_i\right) \d\al_0\cdots\d\al_{n-2}.
			\end{split}
		\end{equation}
		Here, the Riemann sum $(*)$ is taken over tuples $(\al_0,\cdots,\al_{n-2})$ satisfying $\al_i\in\mathbb{Z}_{\ge 0}$ and $m\ge\sum_{i=0}^{n-2}\lambda_i\al_i+N$. Letting $\eps_1\to 0^+$ completes the proof.
	\end{proof}
	
	\paragraph{} In the case $X=\P_K^2$ and $\overline{L}$ is an extension of $L=\shO(1)$, choose $t=(t_1,t_2)$ so that the local zero locus $\{t_1=0\}$ is a nonsingular cubic curve $C$, and $t_2$ is a parameter of $C$ at $x_0$. Then,
	$$D(\overline{L})\ge\frac{1}{9}D(\overline{L}|_C)=\frac{1}{9}[K:\mathbb{Q}]\hat{h}_C(L|_C-3x_0).$$
	If $L|_C-3x_0$ is non-torsion, then the right-hand side is positive. On the other hand, by [\hyperlink{Yua09}{Yua09}], for a parameter system $t'=(t_1',t_2')$ for which each $\{t_i'=0\}$ is a line, we have $D(\overline{L})=0$. Therefore, the quantity $D(\overline{L})$ depends on the choice of parameters for the Okounkov body.
	
	\paragraph{} As another special case, consider $X=C\times\P_K^{n-1}\ (n\ge 2)$ and the datum in Remark~\ref{rmk:one-face}. We have the following formula:
	
	\begin{proposition}\label{prop:product-formula}
		Denote by $\overline{M}$ any adelic extension of $M$. Then,
		$$D(\overline{L})=\frac{b^{n-1}}{(n-1)!}D(\overline{M})=\frac{b^{n-1}}{(n-1)!}[K:\mathbb{Q}]\hat{h}_C(M-dy_0).$$
	\end{proposition}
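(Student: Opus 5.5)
The plan is to prove the first equality $D(\overline{L})=\frac{b^{n-1}}{(n-1)!}D(\overline{M})$ by a product structure. The second equality then follows from Theorem~\ref{thm:main} applied to $(C,y_0,t_n,\overline{M})$.

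By Theorem~\ref{thm:hd-metric-independence}, I may choose the metric freely. I take $\overline{L}=p_1^*\overline{M}+p_2^*\overline{\shO(b)}$, where $\overline{\shO(b)}$ is the Fubini--Study (or any fixed) adelic metric. Then I use the one-face boundary effect of Remark~\ref{rmk:one-face}:
$$D(\overline{L})=\lim_{\eps\to0^+}\lim_{m\to\infty}\frac{1}{m^{n+1}}\sum_{\al\in\nu(mL),\,\al_n/m>d-\eps}\left(-F[m\overline{L}](\al)\right).$$
Here $\nu(mL)=\nu(\shO(bm))\times\nu_{t_n}(mM)$. The key claim is a comparison of discrete transforms: for $\al=(\be,\gamma)$ with $\be\in bm\Delta_{n-1}\cap\mathbb{Z}^{n-1}$ and $\gamma\in\nu(mM)$,
$$F[m\overline{L}](\be,\gamma)=F[m\overline{M}](\gamma)+F[\overline{\shO(bm)}](\be)+O(m).$$

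For the upper bound on $F[m\overline{L}]$ (the direction that gives $D(\overline{L})\ge\cdots$), I will restrict a section $s\in H^0(mL)(\al)$ to $C\times\{\text{point}\}$, or better use the jet-norm estimates (Lemma~\ref{lem:jet-estimates}) along the flag of coordinate linear subspaces of $\P^{n-1}$ times $C$. These are smooth subvarieties with normal bundles $\shO(1)$, trivial on the final curve $C\times\{[1:0:\cdots:0]\}$. This gives $F[m\overline{L}](\be,\gamma)\le F[m\overline{M}](\gamma)+O(m)$, using Lemma~\ref{lem:trivial-underlying-divisor} to absorb the trivial normal-bundle metrics. For the reverse bound, I take products $s_1\otimes t^{\be}$-type sections: if $s_1\in H^0(mM)(\gamma)$ is optimal and $\sigma\in H^0(\shO(bm))$ is the monomial $X^{\be}X_0^{bm-|\be|}$ (which lies in $H^0(\shO(bm))(\be)$ with sup norm $\le 1$ at all places), then $\|s_1\otimes\sigma\|_{v,\sup}=\|s_1\|_{v,\sup}\|\sigma\|_{v,\sup}$. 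This gives $F[m\overline{L}](\be,\gamma)\ge F[m\overline{M}](\gamma)$. So the comparison holds with the $\P^{n-1}$ term absorbed into $O(m)$.

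Summing over the boundary region, the $O(m)$ error contributes $O(m)\cdot\#\{\al:\al_n>m(d-\eps)\}=O(m\cdot m^{n-1}\cdot\eps m)$. After dividing by $m^{n+1}$ this is $O(\eps)$, which vanishes in the limit. The main term factorizes as
$$\#(bm\Delta_{n-1}\cap\mathbb{Z}^{n-1})\cdot\sum_{\gamma>m(d-\eps)}(-F[m\overline{M}](\gamma))\sim\frac{(bm)^{n-1}}{(n-1)!}\sum_{\gamma}(-F[m\overline{M}](\gamma)).$$
Dividing by $m^{n+1}=m^{n-1}m^2$ and applying Remark~\ref{rmk:one-face} with $n=1$ to $\overline{M}$ then yields $D(\overline{L})=\frac{b^{n-1}}{(n-1)!}D(\overline{M})$.

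The main obstacle I expect is the uniformity of the upper bound $F[m\overline{L}]\le F[m\overline{M}]+O(m)$ across all $\be$, including $\be$ near the boundary of $bm\Delta_{n-1}$. Lemma~\ref{lem:jet-estimates} gives an error $c_1(m+\sum\be_i+1)$, which is $O(m)$ since $|\be|\le bm$, so this should be fine. A cleaner alternative, which I would try first, is a direct argument: write $s=\sum_\mu s_\mu\otimes X^\mu$ in the monomial basis. The coefficient $s_{\be}$ is extracted by a Cauchy integral over the torus $|X_i/X_0|=1$ at archimedean places, or by the Gauss norm at non-archimedean places. This gives $\|s_{\be}\|\le e^{O(m)}\|s\|$. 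Then lex-minimality of $\nu(s)$ forces $\nu_{t_n}(s_{\be})=\gamma$ with leading coefficient $1$.
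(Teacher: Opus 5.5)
Your proposal is correct and follows the paper's proof essentially step for step. Both arguments take the product metric via metric-independence, use the one-face boundary effect of Remark~\ref{rmk:one-face}, get the upper bound $F[m\overline{L}]\le F[m\overline{M}]+O(m)$ from Lemma~\ref{lem:jet-estimates} and the lower bound from product sections, and factor out $\binom{bm+n-1}{n-1}\sim\frac{(bm)^{n-1}}{(n-1)!}$. Your $O(\eps)$ bookkeeping for the summed error term is something the paper leaves implicit. Your alternative upper bound, extracting the K\"unneth coefficient $s_{\be}$ by a torus Cauchy integral at archimedean places and the Gauss norm at finite places, is valid and more self-contained than the only-sketched jet-norm lemma in this product setting.
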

	
	\begin{proof}
		We have $\nu(mL)=\nu(\shO(bm))\times\nu(mM)$. Given adelic extensions $\overline{M}$ and $\overline{\shO}(1)$, we may assume that $\overline{L}=\overline{M}\boxtimes\overline{\shO}(b)$ by metric-independence. By Lemma~\ref{lem:jet-estimates}, there is some constant $c_1$ such that for any $m$ and $(\al,\be)\in\nu(\shO(bm))\times\nu(mM)$, we have
		$$F[m\overline{L}](\al,\be)\le F[m\overline{M}](\be)+c_1m.$$
		Conversely, by considering product sections, one finds a constant $c_2$ such that
		$$F[m\overline{L}](\al,\be)\ge F[m\overline{M}](\be)+c_2m.$$
		By Remark~\ref{rmk:one-face}, we have
		\begin{equation}\notag
			\begin{split}
				D(\overline{L})&\ =\lim_{\eps\to 0^+}\lim_{m\to\infty}\frac{1}{m^{n+1}}\sum_{(\al,\be)\in\nu(mL),\be/m>d-\eps}\left(-F[m\overline{L}](\al,\be)\right)\\
				&\ =\lim_{\eps\to 0^+}\lim_{m\to\infty}\frac{1}{m^{n+1}}\sum_{(\al,\be)\in\nu(mL),\be/m>d-\eps}\left(-F[m\overline{M}](\be)\right)\\
				&\ =\lim_{\eps\to 0^+}\lim_{m\to\infty}\frac{1}{m^{n+1}}\binom{bm+n-1}{n-1}\sum_{\be\in\nu(mM),\be/m>d-\eps}\left(-F[m\overline{M}](\be)\right)\\
				&\ =\frac{b^{n-1}}{(n-1)!}D(\overline{M}).
			\end{split}
		\end{equation}
	\end{proof}
	
	\appendix
	
	\section{Comparison with other transforms}\label{app:comparison}
	
	\paragraph{} In this appendix, we list several historical constructions of similar concave (or convex) transforms.
	
	\subsection*{Arithmetic capacity theory}
	
	\paragraph{} The arithmetic capacity theory developed by Cantor, Rumely, Lau, Chinburg, and Varley involves a convex transform for each place $v\in M_K$. In [\hyperlink{RL94}{RL94}], for an ample divisor $D$ on $\P_K^N$, a place $v\in M_K$, and a subset $E_v\subset \P^N(\C_v)$ satisfying certain conditions, Rumely and Lau constructed a log-convex function
	$$\Che_v(\theta)=\Che_v(\theta; E_v, D):\Sigma_D\to\mathbb{R}_{\ge 0},$$
	where
	$$\Sigma_D=\{\theta=(\theta_0,\cdots,\theta_N)\in\mathbb{R}_{\ge 0}^{N+1}:\theta_0+\cdots+\theta_N=\deg D\}.$$
	They proved an integration formula ([\hyperlink{RL94}{RL94}, \S 2.A and Thm. 2.3]) relating the integral of $\log \Che$ to the \textit{local sectional capacity} $S_\gamma(E_v,D)$:
	$$\int_{\Sigma_D}\log\Che_v(\theta;E_v;D)\d\theta_1\cdots\d\theta_N=\frac{1}{(N+1)!}\log S_\gamma(E_v,D).$$
	Moreover, under certain conditions (A1)-(A3) on $\mathbb{E}=\{E_v\}_v$, they defined the \textit{global sectional capacity} $S_\gamma(\mathbb{E},D)$ in an analogous way with $\exp(-\vol)$, and showed that the product of local sectional capacities converges to it. Their construction is an adelic-set precursor of the Chebyshev-transform constructions later on.
	
	\paragraph{} Rumely and Lau's construction is related to Yuan's as follows. Choose the lexicographic order in [\hyperlink{RL94}{RL94}] to be the inverse of the usual one. Assume that $X=\P_K^n$, and let $\overline{L}$ be an adelic extension of $\shO(d)\ (d>0)$. Take $x_0=[0:\cdots:0:1]$, and let $t$ be the standard parameter system at $x_0$. We have a natural decomposition
	$$\P_K^{n+1}=\mathbb{A}_K^{n+1}\amalg H_\infty.$$
	Take $N=n+1$ and $D$ be the divisor $dH_\infty$ on $\P_K^N$. For $v\in M_K$, take
	$$E_v=\{z\in \mathbb{A}^{n+1}(\C_v)=\mathrm{Cone}(\P^n(\C_v)): z\neq 0, \|z^d\|_{-\overline{L},v}\le 1\}.$$
	Here, $z$ is viewed as a vector of $\shO_{X_{\C_v}}(-1)$ in the fiber over $[z]$. Then, the family $\{E_v\}_v$ satisfy (A1)-(A3) by the coherence condition. For $(\al_1,\cdots,\al_n)\in\Delta_t(L)^\circ$, we have
	$$n_vc_v[\overline{L}](\al_1,\cdots,\al_n)=-\log\Che_v(0,\al_1,\cdots,\al_n,d-\al_1-\cdots-\al_n; E_v, D).$$
	
	\subsection*{Toric varieties}
	
	\paragraph{} In the case of toric varieties, the function $c[\overline{L}]$ has been studied in the papers [\hyperlink{Mor11}{Mor11}] and [\hyperlink{BPS15}{BPS15}]. In the setting of [\hyperlink{Mor11}{Mor11}], let $K=\mathbb{Q}$, $X=\P_\mathbb{Q}^n$, $x_0=[1:0:\cdots:0]$, $t_i=\frac{X_i}{X_0}$, and $\overline{L}$ be induced by the arithmetic divisor $\overline{D}_a$. Then, $c[\overline{L}]$ is related to the function $\varphi_a$ by
	$$c[\overline{L}](\al_1,\cdots,\al_n)=\frac{1}{2}\varphi_a(1-\al_1-\cdots-\al_n,\al_1,\cdots,\al_n)$$
	where $\al_i\in\R_{>0}$ and $\al_1+\cdots+\al_n<1$. Indeed, the roots-of-unity averaging argument in [\hyperlink{Mor11}{Mor11}, Claim 1.5.1] shows that the section $T_0^{e_0}\cdots T_n^{e_n}$ minimize the $\sup$ norm among all the sections in which the coefficient of this monomial is $1$. Its $\sup$ norm is explicitly computed in [\hyperlink{Mor11}{Mor11}, Prop. 1.3(1)].
	
	\paragraph{} In the setting of [\hyperlink{BPS15}{BPS15}], let $X$ be a projective toric variety associated to a toric datum $\Sigma$. Take a smooth maximal cone $\sigma$ in $\Sigma$ and a system of generators $v_i\ (1\le i\le n)$ for $\sigma$. Then, we obtain a torus-invariant regular point $x_0=x_\sigma\in X(K)$ and a system of parameters $t$ at $x_0$ consisting of torus eigenfunctions.
	
	\paragraph{} Let $\overline{L}$ be induced by a toric metrized divisor $\overline{D}$. The domain $\Delta_D\subset M_\R$ of $\vartheta_{\overline{D}}$ is identified with $\Delta_t(L)$ via
	$$m\mapsto(\langle m-m_\sigma,v_1\rangle,\cdots,\langle m-m_\sigma,v_n\rangle),$$
	where $m_\sigma\in M_\R$ is the Cartier datum of $D$ along $\sigma$. A similar averaging argument implies that
	$$c[\overline{L}](\al)=[K:\mathbb{Q}]\vartheta_{\overline{D}}(\al),\  \forall\al\in\Delta_t(L)^\circ.$$
	
	\subsection*{Boucksom and Chen's construction}
	
	\paragraph{} In [\hyperlink{BC11}{BC11}], Boucksom and Chen constructed another concave transform $G_{\overline{L}}$ on $\Delta_t(L)$. They showed that their transform is different from Yuan's in general, but agrees in the toric case in the sense that $c[\overline{L}]=[K:\mathbb{Q}]G_{\overline{L}}$. When $L$ is big and $\overline{L}$ is nef, they proved the identity
	$$\int_{\Delta_t(L)}G_{\overline{L}}(\al)\d\al=\frac{1}{[K:\mathbb{Q}](n+1)!}\vol(\overline{L}).$$
	In this case, Balla{\"y} showed in [\hyperlink{Bal21}{Bal21}] and [\hyperlink{Bal24}{Bal24}] respectively that the supremum and infimum of $G_{\overline{L}}$ are the essential and absolute minima of $\overline{L}$. This construction generalizes to adelic line bundles on quasi-projective varieties, see [\hyperlink{BC25}{BC25}].
	
	\section*{References}
	\addcontentsline{toc}{section}{References}
	
	\paragraph{} \hypertarget{Bal21}{[Bal21]} F. Balla{\"y}, \textit{Successive minima and asymptotic slopes in Arakelov geometry}. Compos. Math., 157, no. 6, 1302–1339. 2021.
	
	\paragraph{} \hypertarget{Bal24}{[Bal24]} F. Balla{\"y}, \textit{Arithmetic Okounkov bodies and positivity of adelic Cartier divisors}. J. Algebraic Geom., 33, no. 3, 455–492. 2024.
	
	\paragraph{} \hypertarget{BC11}{[BC11]} S. Boucksom and H. Chen, \textit{Okounkov bodies of filtered linear series}.
	Compos. Math.147 , no. 4, 1205–1229. 2011.
	
	\paragraph{} \hypertarget{BC25}{[BC25]} D. Biswas and Y. Cai, \textit{Concave transforms of compactified $S$-metrized divisors}. arXiv:2505.14023. 2025.
	
	\paragraph{} \hypertarget{BPS15}{[BPS15]} J. I. Burgos Gil, P. Philippon, and M. Sombra, \textit{Successive minima of toric height functions}. Ann. Inst. Fourier (Grenoble), 65, no. 5, 2145–2197. 2015.
	
	\paragraph{} \hypertarget{Fal84}{[Fal84]} G. Faltings, \textit{Calculus on arithmetic surfaces}. Ann. of Math. (2),
	119, no. 2, 387–424. 1984.
	
	\paragraph{} \hypertarget{GS92}{[GS92]} H. Gillet and C. Soul{\'e}, \textit{An arithmetic Riemann–Roch theorem}. Invent. Math., 110, 473–543. 1992.
	
	\paragraph{} \hypertarget{Mor11}{[Mor11]} A. Moriwaki, \textit{Big arithmetic divisors on the projective spaces over $\mathbb{Z}$}. Kyoto J. Math., 51, no. 3, 503–534. 2011.
	
	\paragraph{} \hypertarget{Mor85}{[Mor85]} L. Moret-Bailly, \textit{M{\'e}triques permises}. Ast{\'e}risque 127, 29–87. 1985.
	
	\paragraph{} \hypertarget{Nys14}{[Nys14]} D. Witt Nystr{\" o}m, \textit{Transforming metrics on a line bundle to the Okounkov body}. Ann. Sci. {\'E}c. Norm. Sup{\'e}r. (4), 47, no. 6, 1111–1161. 2014.
	
	\paragraph{} \hypertarget{RL94}{[RL94]} R. Rumely and C. F. Lau, \textit{Arithmetic capacities on $\P^N$}. Math. Z., 215, no. 4, 533–560. 1994.
	
	\paragraph{} \hypertarget{Yua09}{[Yua09]} X. Yuan, \textit{On Volumes of Arithmetic Line Bundles II}. Preprint. arXiv:0909.3680 [math.AG]. 2009.
	
	\paragraph{} \hypertarget{YG25}{[YG25]} X. Yuan and R. Guo, \textit{A First Course in Arakelov Geometry}. Preprint. 2025.
	
	\paragraph{} \hypertarget{YZ26}{[YZ26]} X. Yuan and S. Zhang, \textit{Adelic line bundles on quasi-projective varieties}. Annals of Mathematics Studies, vol. 221. Princeton University Press, Princeton. 2026. DOI: 10.1515/9780691278704.\\
	
	\
	
	{\footnotesize
		Address: School of Mathematical Sciences, Peking University, No. 5 Yiheyuan Road, Haidian District, Beijing, 100871, P.R. China
		
		Email: yxye25@stu.pku.edu.cn
	}

\end{document}